%
%
\documentclass[reqno,12pt]{amsart}
\usepackage{color} 
\usepackage[pdftex]{hyperref}
\hypersetup{
    unicode=false,          
    pdftoolbar=true,        
    pdfmenubar=true,        
    pdffitwindow=false,     
    pdfstartview={FitH},    
    pdftitle={MCP Article},      
    pdfauthor={Michael Holst},   
    pdfsubject={Mathematics},    
    pdfcreator={Michael Holst},  
    pdfproducer={Michael Holst}, 
    pdfkeywords={PDE, analysis, mathematical physics}, 
    pdfnewwindow=true,      
    colorlinks=true,        
    linkcolor=red,          
    citecolor=blue,         
    filecolor=magenta,      
    urlcolor=cyan           
}
\usepackage{times}
\usepackage{amsfonts}
\usepackage{amsmath}
\usepackage{amsthm}
\usepackage{amssymb}
\usepackage{amsbsy}
\usepackage{amscd}
\usepackage{enumerate}
\usepackage{verbatim}
\usepackage{subfigure}
\newtheorem{theorem}{Theorem}[section]

\newtheorem{lemma}[theorem]{Lemma}

\newtheorem{proposition}[theorem]{Proposition}

\newtheorem{definition}[theorem]{Definition}

\newtheorem{remark}[theorem]{Remark}

\numberwithin{equation}{section}  
\definecolor{mblack}   {rgb} {0.2, 0.2, 0.2}
\definecolor{mred}     {rgb} {0.6, 0.2, 0.2}
\definecolor{mgreen}   {rgb} {0.2, 0.6, 0.2}
\definecolor{mblue}    {rgb} {0.2, 0.2, 0.6}
\definecolor{mcyan}    {rgb} {0.2, 0.6, 0.6}
\definecolor{mmagenta} {rgb} {0.6, 0.2, 0.6}
\definecolor{myellow}  {rgb} {0.6, 0.6, 0.2}

\newcommand{\leqs}{\leqslant}
\newcommand{\geqs}{\geqslant}

\newcommand{\tbar}{{\vert\kern-0.25ex\vert\kern-0.25ex\vert}}
\usepackage[color=green!20]{todonotes}

\setlength{\topmargin}{-0.5in}
\setlength{\textheight}{9.5in}
\setlength{\textwidth}{5.85in}
\setlength{\oddsidemargin}{0.325in}
\setlength{\evensidemargin}{0.325in}
\setlength{\marginparwidth}{1.0in}

\setcounter{tocdepth}{1}

\begin{document}

\title[Determining Projections for 
       Non-Homogeneous Incompressible Fluids]
      {A Note On Determining Projections for  \\
      Non-Homogeneous Incompressible Fluids \\
}

\author[B. Faktor]{Benjamin Faktor}
\email{benjaminfaktor@gmail.com}
\address{Canyon Crest Academy\\
         5951 Village Center Loop Road\\
         San Diego, CA 92130}

\author[M. Holst]{Michael Holst}
\email{mholst@ucsd.edu}
\address{Department of Mathematics\\
         University of California San Diego\\
         La Jolla CA 92093}
\thanks{MH was supported in part by NSF DMS/FRG Award 1262982 and NSF DMS/CM Award 1620366.}

\date{\today}

\keywords{Navier-Stokes equations, fluid mechanics, non-homogeneous fluids, weak solutions, determining sets, determine degrees of freedom, inertial manifolds, rough interpolants}

\maketitle

\begin{abstract}
In this note, we consider a viscous incompressible fluid in a finite
domain in both two and three dimensions, and examine the question of
determining degrees of freedom (projections, functionals, and nodes).
Our particular interest is the case of non-constant viscosity, representing
either a fluid with viscosity that changes over time (such as an oil
that loses viscosity as it degrades), or a fluid with viscosity
varying spatially (as in the case of two-phase or multi-phase fluid models).
Our goal is to apply the determining projection framework developed
by the second author in previous work for weak solutions to the 
Navier-Stokes equations, in order to establish bounds on the number of 
determining functionals for this case, or equivalently, the dimension of a 
determining set, based on the approximation properties of an underlying 
determining projection.
The results for the case of time-varying viscosity mirror those
for weak solutions established in earlier work for constant viscosity.
The case of space-varying viscosity, treated within a single-fluid
Navier-Stokes model, is quite challenging to analyze, but we explore 
some preliminary ideas for understanding this case.
\end{abstract}


{\footnotesize
\tableofcontents
}
\vspace*{-0.5cm}

\section{Introduction}
\label{sec:intro}

In the following, we consider a viscous incompressible
fluid in $\Omega \subset \mathbb{R}^d$, where $\Omega$ is an open bounded
domain with Lipschitz continuous boundary, and where $d=2$ or $d=3$.
Given the kinematic viscosity $\nu>0$, and the vector volume force 
function $f(x,t)$ for each $x \in \Omega$ and $t \in (0,\infty)$, 
the governing Navier-Stokes equations (NSE) for the fluid velocity 
vector $u=u(x,t)$ and the scalar pressure field $p=p(x,t)$ are
given by:
\begin{align}
\frac{\partial u}{\partial t} - \nu \Delta u
    + (u \cdot \nabla) u + \nabla p 
&= f \ \ \mathrm{in} \ \ \Omega \times (0,\infty),
   \label{eqn:nv}
\\
\nabla \cdot u 
&= 0
  \ \ \mathrm{in} \ \ \Omega \times (0,\infty).
   \label{eqn:nv_bc}
\end{align}
One is also provided with initial conditions $u(0) = u_0$, as well as 
boundary conditions on $\partial \Omega \times (0,\infty)$.
Our goal in this article is to examine some questions about a concept
known as \emph{determining degrees of freedom} in the flow described
by~\eqref{eqn:nv}--\eqref{eqn:nv_bc}.
While the classical setting involves the assumption of a constant
bulk viscosity $\nu>0$, 
our particular interest here is in the case of non-constant viscosity, 
representing either a fluid with viscosity $\nu(t)$ that changes over time 
(such as an oil that loses viscosity as it degrades), or a fluid with 
viscosity $\nu(x)$ varying spatially (as in the case of two-phase or 
multi-phase fluid models), or both, represented by a viscosity $\nu(x,t)$
that changes over space and time.
We will assume that $\nu$ is everywhere positive as a function of time
and/or space, and will also assume that it \emph{a priori} satisfies 
some uniform pointwise upper and lower bounds, based on some underlying 
physical considerations.
Although we do not consider dependence of the viscosity on fluid velocity in
this work, we note that there has long been an active numerical simulation 
community that studies this case, and there is now also growing interest in
the analysis of the Navier-Stokes equations with variable viscosity;
cf.~\cite{HeLi20,HaPi15,ArSi15,AAL13,GuZh09,AMM07}.

The notion of {\em determining modes} for the NSE was
first introduced in~\cite{FoPr67} as an attempt to identify and estimate the
number of degrees of freedom in turbulent flows;
a thorough discussion of the role of determining sets in turbulence theory
can be found in~\cite{CFMT85}.
This core idea later led to the study of 
{\em Inertial Manifolds}~\cite{FST88}.
Estimates of the number of determining modes under various assumptions
have been developed sine the early 1980's; some examples
include~\cite{FMTT83,JoTi93}.
The notion of {\em determining nodes} and related concepts were introduced 
in~\cite{FoTe83,FoTe84}, followed by {\em determining volumes}
in~\cite{FoTi91,JoTi91}, and various estimates of their
number in different modeling scenarios can be found in e.g.~\cite{JoTi92,JoTi93}.
A unified framework for modes, nodes, and volumes was presented 
in~\cite{CJT95a,CJT97}, including the relationship to Inertial Manifolds.
In~\cite{Hols95b,HoTi96b}, we extended the results of~\cite{CJT95a,CJT97} to the more 
general setting of weak solutions lying in a suitably defined divergence-free solution space $V$ (see~\S\ref{sec:prelim} below).
In particular, we showed that if a 
\emph{projection operator}
$R_N \colon V \to V_N \subset L^2(\Omega)$ into a subset $V_N$
with $N = \dim(V_N) < \infty$,
satisfies an approximation inequality for $\gamma > 0$ of the form,
\begin{equation}
  \label{eqn:approximation-pre}
\| u - R_N u \|_{L^2(\Omega)} \leqs C_1 N^{-\gamma} \| u \|_{H^1(\Omega)},
\end{equation}
then the operator $R_N$ is a \emph{determining projection} for the
system~\eqref{eqn:nv}--\eqref{eqn:nv_bc} in the sense of 
Definition~\ref{def:determining_operator} below, provided $N$ is large enough.
Furthermore, in~\cite{Hols95b,HoTi96b}, we also derived explicit bounds on the 
dimension $N$ which guarantees that $R_N$ is determining, and we gave explicit 
constructions of determining projections for both smooth and weak solutions 
using ``rough'' finite element quasi-interpolants.
Our more recent article~\cite{HLT08a} generalized these results further,
to a more general family of regularized NSE and MHD models that
includes~\eqref{eqn:nv}--\eqref{eqn:nv_bc} as a special case.
This area of research has continued to generate substantial activity; 
a survey through 2009 appears in our earlier article~\cite{HLT08a},
and much more recent related activity includes~\cite{GaMe14,ZeGa15,GaGu18} and
the references therein, among many other related works that are too 
numerous to list here.

Bounds on the number of determining modes, nodes, and volumes are usually 
phrased in terms of a generalized {\em Grashof number}, defined for 
the two-dimensional NSE as:
$$
Gr = \frac{c_\rho^2 F}{\nu^2} = \frac{F}{\lambda_1 \nu^2},
$$
where $\lambda_1$ is the smallest eigenvalue of the Stokes operator
and $c_\rho=1/\sqrt{\lambda_1}$ is the related (best) Poincar\'{e} constant.
Here,
$F = \limsup_{t \rightarrow \infty} ( \int_{\Omega} |f(x,t)|^2 )^{1/2}$
if $f \in L^2(\Omega)$ for almost every~$t$, or
$F = \limsup_{t \rightarrow \infty} \sqrt{\lambda_1} \| f \|_{H^{-1}(\Omega)}$
if $f \in H^{-1}(\Omega)$ for almost every~$t$.
Due to the failure of the Sobolev embedding
$H^1 \hookrightarrow C^0$ in dimensions 2 and 3, determining node 
analysis, which was based on point-wise interpolants of the velocity,
was limited to $H^2$-regular solutions, although it was
understood that determining modes and volume elements 
made sense under weaker conditions.
To construct a general analysis framework for the case of weak 
(e.g., $H^1$-regular solutions)
solutions to~\eqref{eqn:nv}--\eqref{eqn:nv_bc},
in~\cite{HoTi96b} we introduced the notions of
{\em determining projections} and {\em determining functionals},
which we now define.
(The standard spaces $H$, $V$, and $V'$ for~\eqref{eqn:nv}--\eqref{eqn:nv_bc}
are reviewed below in~\S\ref{sec:prelim}.)
\begin{definition}[{{\bf Determining Projections for the NSE}}]
   \label{def:determining_operator}
Let $f(t), g(t) \in V'$ be any two forcing functions satisfying
\begin{equation}
    \label{eqn:forcing}
\lim_{t \rightarrow \infty} \| f(t) - g(t) \|_{V'} = 0,
\end{equation}
and let $u, v \in V$ be corresponding weak solutions 
to~\eqref{eqn:nv}--\eqref{eqn:nv_bc}.
The projection operator $R_N \colon V \to V_N \subset L^2(\Omega)$, 
~ $N = \dim(V_N) < \infty$,
is called a {\bf determining projection} for weak solutions of the
$d$-dimensional NSE if
\begin{equation}
  \label{eqn:project}
\lim_{t \rightarrow \infty} 
    \| R_N( u(t) - v(t)) \|_{L^2(\Omega)} = 0,
\end{equation}
implies that
\begin{equation}
    \label{eqn:determine}
\lim_{t \rightarrow \infty} \| u(t) - v(t) \|_H = 0.
\end{equation}
Given a basis $\{ \phi_i \}_{i=1}^N$ for the finite-dimensional space $V_N$, 
and a set of bounded linear functionals $\{ l_i \}_{i=1}^N$ from $V'$, 
a projection operator can be constructed as:
\begin{equation}
  \label{eqn:projectOper}
R_N u = \sum_{i=1}^N l_i(u) \phi_i.
\end{equation}
Condition \eqref{eqn:project} is then implied by:
\begin{equation}
  \label{eqn:linfunc}
\lim_{t \rightarrow \infty} 
    | l_i( u(t) - v(t)) | = 0, \ \ \ \ \ i=1, \ldots, N
\end{equation}
and in this case we refer to $\{l_i\}_{i=1}^N$ 
as a set of {\bf determining functionals}.
\end{definition}
The analysis of whether $R_N$ or $\{l_i\}_{i=1}^N$ are determining 
can be reduced to an analysis of the approximation properties of $R_N$.
Note that in this construction,
the basis $\{\phi_i\}_{i=1}^N$ need not span a subspace of the solution 
space $V$, so that the functions $\phi_i$ need not, for example,
be divergence-free.
Note that Definition~\ref{def:determining_operator}
encompasses each of the notions of determining modes, nodes, and volumes
by making particular choices for the sets of functions 
$\{\phi_i\}_{i=1}^N$ and $\{l_i\}_{i=1}^N$.

\subsection*{Outline}

Preliminary material is presented in~\S\ref{sec:prelim},
including notation used for Lebesgue and Sobolev spaces and norms,
and some inequalities for bounding the terms appearing 
in weak formulations of the NSE.
In~\S\ref{sec:approx-theory}, we given an overview of the 
general framework for constructing determining projections for the NSE
for both two and three spatial dimensions.
To make use of the framework to establish bounds on the number
of determining degrees of freedom for weak solutions, one must assume, 
or establish, a single \emph{a priori} bound for solutions to the equations,
and also provide a projection operator that satisfies a single 
approximation inequality.
The remainder of the article then turns to the necessary \emph{a priori}
bounds for non-constant viscosity.
In~\S\ref{sec:time-varying-estimates}, 
we derive some \emph{a priori} bounds for the case of
time-varying viscosity that are needed to make use of the determining 
projection framework in~\S\ref{sec:approx-theory}.
Section~\S\ref{sec:space-varying}
looks briefly at a simplified model for space-varying viscosity.
We first develop a natural weak formulation for a simplified model, 
where the viscosity is allowed to now be space-varying, but is also assumed 
to be explicitly given as data, and in particular, does not depend on the 
fluid velocity.
Using this simplified formulation, we then establish some basic 
\emph{a priori} bounds for use with the 
determining projection framework from~\S\ref{sec:approx-theory}.
Some additional technical tools are summarized in Appendix~\ref{app:technical},
\emph{a priori} estimates for the constant viscosity case
are presented in Appendix~\ref{app:apriori-constant-viscosity}.

\section{Preliminary material}
\label{sec:prelim}

We briefly review some background material and notation,
following the approach taken in our earlier 
articles~\cite{Hols95b,HoTi96b,HLT08a},
which in turn followed the notational 
conventions used in~\cite{CoFo88,Lion69,Tema77,Tema83}.
To keep the discussion in this section as clear and concise as possible, 
we have placed some technical results that are repeatedly used throughout
the paper in Appendix~\ref{app:technical}.

Let $\Omega \subset \mathbb{R}^d$ denote an open bounded set.
The embedding and other standard results we will need to rely on are known 
to hold for example if the domain $\Omega$ has a locally Lipschitz boundary, 
denoted as $\Omega \in \mathcal{C}^{0,1}$ (cf.~\cite{Adam78}).
For example, open bounded convex sets $\Omega \subset \mathbb{R}^d$ 
satisfy $\Omega \in \mathcal{C}^{0,1}$ (Corollary~1.2.2.3 in~\cite{Gris85}),
so that convex polyhedral domains
are in $\mathcal{C}^{0,1}$.
Let $H^k(\Omega)$ denote the usual Sobolev spaces $W^{k,2}(\Omega)$.
Employing multi-index notation, the distributional partial derivative of 
order $|\alpha|$ is denoted $D^{\alpha}$, so that the 
(integer-order) norms and semi-norms in $H^k(\Omega)$ may be denoted
$$
\| u \|^2_{H^k(\Omega)}
    = \sum_{j=0}^k |\Omega|^{\frac{j-k}{d}} | u |_{H^j(\Omega)}^2,
\ \ \ \ \ \ 
| u |^2_{H^j(\Omega)} 
    = \sum_{| \alpha | = j} \| D^{\alpha} u \|_{L^2(\Omega)},
\ \ \ 
0 \leqs j \leqs k,
$$
where $|\Omega|$ represents the measure of $\Omega$.
Fractional order Sobolev spaces and norms may be defined for example
through Fourier transform and extension theorems, or through 
interpolation.
A fundamentally important subspace is the $k=1$ case of
$$
H^k_0(\Omega) = \{ \mathrm{closure~of}~ C_0^{\infty}(\Omega)
        ~\mathrm{in}~ H^k(\Omega) \},
$$
for which the Poincar\'{e} Inequality holds.
(See Lemma~\ref{app:lemma:poincare} in Appendix~\ref{app:technical}.)
The spaces above extend naturally (cf.~\cite{Tema77}) to product spaces of 
vector functions $u=(u_1,u_2,\ldots,u_d)$, which are denoted with the
same letters but in bold-face; for example,
$\mathbf{H}^k_0(\Omega) = \left( H^k_0(\Omega) \right)^d$.
The inner-products and norms in these product spaces are extended in the 
natural Euclidean way; the convention here will be to subscript these 
extended vector norms the same as the scalar case.

Define now the space $\mathcal{V}$ of divergence free 
$\mathbf{C}^{\infty}$ vectors with compact support as
$$
\mathcal{V} = \left\{ \phi \in \mathbf{C}_0^{\infty}(\Omega)
            ~|~ \nabla \cdot \phi = 0 \right\}.
$$
Two subspaces of $\mathbf{L}^2(\Omega)$ and $\mathbf{H}^1_0(\Omega)$
are fundamental to the study of the NSE:
$$
H = \{ \mathrm{closure~of}~ \mathcal{V} ~\mathrm{in}~ \mathbf{L}^2(\Omega) \},
\ \ \ \ \ \ \ \ \ \ 
V = \{ \mathrm{closure~of}~ \mathcal{V} ~\mathrm{in}~ \mathbf{H}^1_0(\Omega) \}.
$$
We use a fairly standard notation (cf.~\cite{Tema77}) 
for inner-products and norms in $H$ and $V$ is:
\begin{align}
(u,v)_H &= (u,v)_{L^2(\Omega)},
\quad
\|u\|_H = \|u\|_{L^2(\Omega)},
   \label{eqn:notation-H}
\\
(u,v)_V &= [u,v]_{H^1(\Omega)},
\quad
\|u\|_V = |u|_{H^1(\Omega)}.
   \label{eqn:notation-V}
\end{align}
Thanks to the Poincar\'{e} inequality,
the $H^1$-semi-inner-product $[u,v]_{H^1(\Omega)}$ is an
inner-product on $V$, and the
$H^1$-semi-norm $|u|_{H^1(\Omega)}$ is a norm on $V$.

The NSE~\eqref{eqn:nv}--\eqref{eqn:nv_bc} with homogeneous Dirichlet
(no-slip) boundary conditions are
equivalent (cf.~\cite{Tema77}) to the functional differential equation:
\begin{equation}
\frac{du}{dt} + \nu Au + B(u,u) = f, \ \ \ \ \ u(0) = u_0,
   \label{eqn:strong}
\end{equation}
where the Stokes operator $A$ and bilinear map $B$ are defined as
$$
Au = -P \Delta u,
\ \ \ \ \ B(u,v) = P [(u \cdot \nabla) v],
$$
where the operator $P$ is the Leray orthogonal projector,
$P \colon H_0^1 \to V$ and $P \colon L^2 \to H$, respectively.
Weak formulations of the NSE will use the 
bilinear Dirichlet form $a(\cdot,\cdot)$ and trilinear form 
$b(\cdot,\cdot,\cdot)$ as:
$$
a(u,v) = (\nabla u, \nabla v)_H,
\ \ \ \ \ b(u,v,w) = (B(u,v),w)_H = (P ((u \cdot \nabla) v), w)_H.
$$
Again, thanks to the Poincar\'{e} inequality,
the form $a(\cdot,\cdot)$ is
actually an inner-product on V, and as noted above, the induced semi-norm 
$|\cdot|_{H^1(\Omega)}=a(\cdot,\cdot)^{1/2}$ is in fact a norm on V, 
equivalent to the $H^1$-norm.
{\em A priori} bounds and various symmetries can be derived for the 
trilinear form $b(\cdot,\cdot,\cdot)$;
the results of this type that we will need are collected together
in Appendix~\ref{app:technical}.

A general weak formulation of the NSE~\eqref{eqn:nv}--\eqref{eqn:nv_bc} can be
written as~(cf.~\cite{Tema77,Tema83,CoFo88}):
\begin{definition}[{{\bf Weak Solutions of the NSE}}]
Given $f \in L^2([0,T];V')$, a weak solution of the NSE
satisfies $u \in L^2([0,T];V) \cap C_w([0,T];H)$, 
$du/dt \in L^1_{\text{loc}}((0,T];V')$, and
\begin{align}
< \frac{du}{dt} ,v >
    + \nu a(u,v) + b(u,u,v) &= <f,v>, 
      \ \ \forall v \in V, 
      \ \ \text{~for~almost~every~} t,
   \label{eqn:weak}
\\
u(0) &= u_0.
   \label{eqn:weak_ic}
\end{align}
\end{definition}
Here, the space $C_w([0,T];H)$ is the subspace of $L^{\infty}([0,T];H)$ of
weakly continuous functions, and $<\cdot,\cdot>$ denotes the duality pairing
between $V$ and $V'$, where $H$ is the Riesz-identified pivot space in the
Gelfand triple $V \subset H=H' \subset V'$.
Note that since the Stokes operator can be uniquely extended 
to $A \colon V \to V'$, and since it can be shown that
$B \colon V \times V \to V'$ (cf.~\cite{CoFo88,Tema83} for both results),
the functional form~\eqref{eqn:strong} still makes sense for weak 
solutions, and the total operator represents a mapping $V \to V'$.

Consider now two forcing functions $f, g \in L^2([0,\infty];V')$ and
corresponding weak solutions $u$ and $v$ to~\eqref{eqn:strong} 
in either the two- or three-dimensional case.
Subtracting equations~\eqref{eqn:strong} for $u$ and $v$ yields
an equation for the difference function $w=u-v$, namely
\begin{align}
\frac{dw}{dt} + \nu A w + B(u,u) - B(v,v) &= f-g.
  \label{eqn:above}
\end{align}
Since the residual of~\eqref{eqn:above} lies in the dual space $V'$,
for almost every $t$, 
we can consider duality pairing of~\eqref{eqn:above} with a function 
in $V$, and in particular with $w \in V$, which yields
$$
<\frac{dw}{dt}, w>
    + \nu a(w,w) + b(u,u,w) - b(v,v,w) = <f-g,w> \ \text{~for~almost~every~} t.
$$
Using the notation~\eqref{eqn:notation-H}--\eqref{eqn:notation-V}
going forward, it can be shown (cf.~\cite{Tema77,Tema83}) that
\begin{align}
\frac{1}{2} \frac{d}{dt} \|w\|_H^2 &= <\frac{dw}{dt}, w>
\label{eqn:dbydt}
\end{align}
in the sense of distributions.
Lemma~\ref{app:lemma:symmetries} in Appendix~\ref{app:technical}
establishes the symmetry relation 
$b(w,u,w) = b(u,u,w) - b(v,v,w), \forall u,v,w \in V$, 
so the function $w=u-v$ must satisfy
\begin{align}
\frac{1}{2}\frac{d}{dt} \|w\|_H
   + \nu \|w\|_V^2 + b(w,u,w) &= <f-g,w>.
   \label{eqn:weak_diff}
\end{align}
Equation~\eqref{eqn:weak_diff} will be the starting point for our analysis of determining projections below. 
In the introduction, we highlighted an approximation property~\eqref{eqn:approximation-pre} that  we will assume that a determining projection satisfies, and we will give an explicit example of such a projection below from~\cite{Hols95b,HoTi96b}.
A useful consequence of property~\eqref{eqn:approximation-pre} that was noted in~\cite{HoTi96b} is the following.
\begin{lemma}
  \label{lemma:approximation}
Let $R_N \colon V \to V_N \subset L^2(\Omega)$, ~ $N = \dim(V_N) < \infty$,
satisfy the following approximation inequality for some $\gamma > 0$:
\begin{align}
\| u - R_N u \|_{L^2(\Omega)} & \leqs C_1 N^{-\gamma} \| u \|_{H^1(\Omega)}.
  \label{eqn:approximation}
\end{align}
Then the following inequalities hold:
\begin{align}
\|u\|_{L^2(\Omega)} 
  & \leqs 2 C_1^2 N^{-2\gamma} \|u\|_{H^1(\Omega)}
  + 2 \|R_N u\|_{L^2(\Omega)}^2,
   \label{eqn:approximation-inequality-1}
\\
\|u\|_{H^1(\Omega)} 
  & \geqs [N^{2\gamma} / (2 C_1^2)] \|u\|_{L^2(\Omega)}
  - [N^{2\gamma} /C_1^2] \|R_N u\|_{L^2(\Omega)}^2.
   \label{eqn:approximation-inequality-2}
\end{align}
\end{lemma}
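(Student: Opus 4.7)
The plan is to derive both inequalities from a single application of the triangle inequality combined with the elementary Young-type bound $(a+b)^2 \leq 2a^2 + 2b^2$, after which the second inequality follows from the first by algebraic rearrangement. This is a short calculation rather than a deep argument, but it is worth spelling out carefully because the inequalities are used repeatedly throughout the rest of the paper.

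First, I would write
\begin{equation*}
\|u\|_{L^2(\Omega)} \leqs \|u - R_N u\|_{L^2(\Omega)} + \|R_N u\|_{L^2(\Omega)},
\end{equation*}
which is just the triangle inequality in $L^2(\Omega)$ applied to $u = (u - R_N u) + R_N u$. Invoking the hypothesized approximation property \eqref{eqn:approximation} on the first term gives
\begin{equation*}
\|u\|_{L^2(\Omega)} \leqs C_1 N^{-\gamma} \|u\|_{H^1(\Omega)} + \|R_N u\|_{L^2(\Omega)}.
\end{equation*}
Squaring both sides and using $(a+b)^2 \leqs 2a^2 + 2b^2$ then produces the bound
\eqref{eqn:approximation-inequality-1} (with the natural interpretation that both the norm on the left and the $H^1$-norm on the right appear squared).

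For the second inequality, I would simply isolate $\|u\|_{H^1(\Omega)}^2$ in the estimate above. Moving $2\|R_N u\|_{L^2(\Omega)}^2$ to the left and dividing through by $2C_1^2 N^{-2\gamma}$ yields \eqref{eqn:approximation-inequality-2}. No additional analytic machinery is required, since both assertions are consequences of the same squared triangle inequality.

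There is no real obstacle here: the only technical point to be careful about is the squaring step, since replacing an inequality between norms by an inequality between squared norms requires the elementary bound $(a+b)^2 \leqs 2a^2 + 2b^2$ rather than simple squaring, which would have produced a cross term $2ab$ that one would then have to control by a further Young-type inequality. Using the sharp form $2a^2 + 2b^2$ directly avoids introducing an extraneous free parameter and gives the explicit constants $2C_1^2$ and $1/(2C_1^2)$ appearing in the statement.
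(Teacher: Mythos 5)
Your argument is correct and yields exactly the constants in the statement, but it reaches them by a different decomposition than the paper. The paper expands $\| u - R_N u \|_{L^2(\Omega)}^2 = \|u\|_{L^2(\Omega)}^2 - 2(u,R_N u)_{L^2(\Omega)} + \|R_N u\|_{L^2(\Omega)}^2$, bounds the cross term by Cauchy--Schwarz, and then applies Young's inequality with the particular splitting $\bigl(\tfrac{1}{\sqrt{2}}\|u\|_{L^2(\Omega)}\bigr)\bigl(2\sqrt{2}\|R_N u\|_{L^2(\Omega)}\bigr) \leqs \tfrac{1}{2}\|u\|_{L^2(\Omega)}^2 + 2\|R_N u\|_{L^2(\Omega)}^2$ before multiplying through by $2$. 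You instead start from the triangle inequality $\|u\|_{L^2(\Omega)} \leqs \|u - R_N u\|_{L^2(\Omega)} + \|R_N u\|_{L^2(\Omega)}$ and square using $(a+b)^2 \leqs 2a^2 + 2b^2$. Since that elementary bound is itself just $2ab \leqs a^2 + b^2$, the two proofs are equivalent in substance, but yours is the more streamlined bookkeeping: it avoids the inner-product expansion and the need to choose the Young splitting that makes the $\|u\|_{L^2(\Omega)}^2$ coefficient come out to $\tfrac{1}{2}$. Your rearrangement to obtain \eqref{eqn:approximation-inequality-2} matches the paper's. One small point worth making explicit either way: as printed, \eqref{eqn:approximation-inequality-1} and \eqref{eqn:approximation-inequality-2} are missing exponents on $\|u\|_{L^2(\Omega)}$ and $\|u\|_{H^1(\Omega)}$ (both should be squared, as is clear from the paper's own proof and from the use of these inequalities in Theorem~\ref{thm:main-2d}); you correctly read the statement with those squares in place.
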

\begin{proof}
We start with squaring~\eqref{eqn:approximation},
\begin{align*}
\| u - R_N u \|_{L^2(\Omega)}^2 
   & = \| u \|_{L^2(\Omega)}^2 - 2(u,R_N u)_{L^2(\Omega)}
         + \| R_N u \|_{L^2(\Omega)}^2
   \leqs C_1^2 N^{-2\gamma} \| u \|_{H^1(\Omega)}^2.
\end{align*}
Rearranging the inequality we have
\begin{align*}
\| u \|_{L^2(\Omega)}^2 
& \leqs C_1^2 N^{-2\gamma} \| u \|_{H^1(\Omega)}^2
  - \| R_N u \|_{L^2(\Omega)}^2
  + 2(u,R_N u)_{L^2(\Omega)}
\\
& \leqs C_1^2 N^{-2\gamma} \| u \|_{H^1(\Omega)}^2
  - \| R_N u \|_{L^2(\Omega)}^2
  + 2\|u\|_{L^2(\Omega)} \|R_N u\|_{L^2(\Omega)}
\\
 & =     C_1^2 N^{-2\gamma} \| u \|_{H^1(\Omega)}^2
   - \| R_N u \|_{L^2(\Omega)}^2
   + \left(\tfrac{1}{\sqrt{2}} \|u\|_{L^2(\Omega)} \right)
     \left(2 \sqrt{2} \|R_N u\|_{L^2(\Omega)} \right)
\\
 & \leqs C_1^2 N^{-2\gamma} \| u \|_{H^1(\Omega)}^2
   - \| R_N u \|_{L^2(\Omega)}^2
   + \tfrac{1}{2}\|u\|_{L^2(\Omega)}^2
   + 2 \|R_N u\|_{L^2(\Omega)}^2,
\end{align*}
which after multiplying through by 2 and simplifying 
gives~\eqref{eqn:approximation-inequality-1}.
Rearrangement of the terms in~\eqref{eqn:approximation-inequality-1}
then also gives~\eqref{eqn:approximation-inequality-2}
\end{proof}
Finally, we note key tools for establishing a number of \emph{a priori} estimates in Sections~\ref{sec:time-varying-estimates} and~\ref{sec:space-varying} will be both classical and generalized Gronwall inequalities (cf.~\cite{FMTT83,JoTi91}) which we include in Appendix~\ref{app:technical}.

\section{The framework for constructing determining projections}
\label{sec:approx-theory}

We now give an overview of the general framework for constructing determining 
projections for the NSE for both two and three spatial dimensions,
represented by Theorem~\ref{thm:main-2d} below.
To make use of the framework to establish bounds on the number
of determining degrees of freedom for weak solutions, one must assume, 
or establish, a single \emph{a priori} bound for solutions to the equations,
(inequality~\eqref{eqn:energy-for-thm} below)
and also provide a projection operator that satisfies a single 
approximation inequality
(inequality~\eqref{eqn:approximation} above).

Our earlier results in~\cite{Hols95b,HoTi96b} are included as 
particular instances of this framework, and
we include in Appendix~\ref{app:apriori-constant-viscosity}
the well-known \emph{a priori} bounds
for constant viscosity in the $d=2$ and $d=3$ cases that were used 
in~\cite{Hols95b,HoTi96b,HLT08a}.
The remainder of the article then turns to the necessary \emph{a priori}
bounds for non-constant viscosity.

\begin{theorem}[{{\bf Existence of Determining Projections for the NSE on domains $\Omega \subset \mathbb{R}^2$}}]
   \label{thm:main-2d}
Let $f(t), g(t) \in V'$ be any two forcing functions satisfying
$$
\lim_{t \rightarrow \infty} \| f(t) - g(t) \|_{V'} = 0,
$$
and let $u, v \in V$ be the corresponding weak solutions 
to~\eqref{eqn:nv}--\eqref{eqn:nv_bc} for $d=2$.
If there exists a projection operator 
$R_N \colon V \to V_N \subset L^2(\Omega)$, $N = \dim(V_N)$,
satisfying
$$
\lim_{t \rightarrow \infty} 
    \| R_N( u(t) - v(t)) \|_{L^2(\Omega)} = 0,
$$
and satisfying for $\gamma>0$ the approximation inequality
$$
\| u - R_N u \|_{L^2(\Omega)} \leqs C_1 N^{-\gamma} \| u \|_{H^1(\Omega)},
$$
then
$$
\lim_{t \rightarrow \infty} \| u(t) - v(t) \|_H = 0
$$
holds if $N$ is such that
$$
\infty > N > C \left( \frac{1}{\nu^2} 
     \limsup_{t \rightarrow \infty} \|f(t)\|_{V'}
     \right)^{\frac{1}{\gamma}},
$$
where $C$ is a constant independent of $\nu$ and $f$.
\end{theorem}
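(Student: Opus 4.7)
The plan is to derive a differential inequality for the $H$-norm squared of $w = u - v$, starting from~\eqref{eqn:weak_diff}, and then close it via the generalized Gronwall lemma in Appendix~\ref{app:technical} using the approximation property of $R_N$ and the standard 2D a priori bound from Appendix~\ref{app:apriori-constant-viscosity}. Equation~\eqref{eqn:weak_diff} reads
$$\tfrac{1}{2}\tfrac{d}{dt}\|w\|_H^2 + \nu \|w\|_V^2 + b(w,u,w) = \langle f-g, w\rangle,$$
and I would bound the nonlinearity using the 2D version of the trilinear estimate from Appendix~\ref{app:technical}, $|b(w,u,w)| \leqs c_2 \|w\|_H \|w\|_V \|u\|_V$, together with the duality bound $|\langle f-g,w\rangle| \leqs \|f-g\|_{V'}\|w\|_V$. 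Two applications of Young's inequality absorb $\tfrac{1}{2}\nu\|w\|_V^2$ into the dissipation, producing
$$\frac{d}{dt}\|w\|_H^2 + \nu\|w\|_V^2 \leqs \frac{2c_2^2}{\nu}\|u\|_V^2\,\|w\|_H^2 + \frac{2}{\nu}\|f-g\|_{V'}^2.$$

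Next I would convert part of the dissipation into a coercive $H$-term via Lemma~\ref{lemma:approximation}: rearranging the squared form of~\eqref{eqn:approximation-inequality-1} yields $\|w\|_V^2 \geqs \tfrac{N^{2\gamma}}{2C_1^2}\|w\|_H^2 - \tfrac{N^{2\gamma}}{C_1^2}\|R_N w\|_{L^2(\Omega)}^2$, and substituting gives
$$\frac{d}{dt}\|w\|_H^2 + \alpha(t)\,\|w\|_H^2 \leqs \beta(t),$$
where $\alpha(t) = \tfrac{\nu N^{2\gamma}}{2C_1^2} - \tfrac{2c_2^2}{\nu}\|u(t)\|_V^2$ and $\beta(t) = \tfrac{\nu N^{2\gamma}}{C_1^2}\|R_N w(t)\|_{L^2(\Omega)}^2 + \tfrac{2}{\nu}\|f(t)-g(t)\|_{V'}^2$.

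Finally I would invoke the generalized Gronwall lemma of Appendix~\ref{app:technical}. The hypotheses $\|R_N w\|_{L^2(\Omega)} \to 0$ and $\|f-g\|_{V'} \to 0$ force $\beta(t) \to 0$, so its running averages vanish. For $\alpha$, the classical 2D averaged-dissipation bound $\limsup_{t\to\infty}\tfrac{1}{T}\int_t^{t+T}\|u(s)\|_V^2\,ds \leqs \nu^{-2}\bigl(\limsup_{t\to\infty}\|f(t)\|_{V'}\bigr)^2$ from Appendix~\ref{app:apriori-constant-viscosity} gives
$$\liminf_{t\to\infty}\frac{1}{T}\int_t^{t+T}\alpha(s)\,ds \geqs \frac{\nu N^{2\gamma}}{2C_1^2} - \frac{2c_2^2}{\nu^3}\Bigl(\limsup_{t\to\infty}\|f(t)\|_{V'}\Bigr)^2,$$
which is strictly positive precisely when $N > C\bigl(\nu^{-2}\limsup_{t\to\infty}\|f(t)\|_{V'}\bigr)^{1/\gamma}$ for a constant $C = (4c_2^2 C_1^2)^{1/(2\gamma)}$ depending only on $c_2$, $C_1$, and $\gamma$. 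The generalized Gronwall lemma then delivers $\|w(t)\|_H \to 0$, which is the desired conclusion.

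The principal obstacle is constant-tracking: the theorem's threshold has the precise scaling $N \sim (\nu^{-2}F)^{1/\gamma}$ with $F = \limsup\|f\|_{V'}$, and matching this requires (i) a sharp choice of Young constants so the $\nu^{-1}$ from the nonlinear side does not inflate to a larger negative power of $\nu$, (ii) using the \emph{time-averaged} 2D energy estimate rather than any pointwise surrogate, and (iii) using the squared version of Lemma~\ref{lemma:approximation} so that the coercive coefficient scales as $N^{2\gamma}$ and not $N^{\gamma}$, yielding the exponent $1/\gamma$ after taking the $2\gamma$-th root. A secondary technical point is legitimizing the pointwise time-differentiation of $\|w(t)\|_H^2$ from the distributional identity~\eqref{eqn:dbydt}, which standard NSE theory supplies for weak solutions.
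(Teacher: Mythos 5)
Your proposal is correct and follows essentially the same route as the paper's own proof: starting from~\eqref{eqn:weak_diff}, bounding $b(w,u,w)$ with the 2D Ladyzhenskaya-type estimate~\eqref{app:eqn:lady_2d}, absorbing via Young's inequality, converting dissipation into a coercive $H$-term through Lemma~\ref{lemma:approximation}, and closing with the Generalized Gronwall Lemma~\ref{app:lemma:gronwall-generalized} together with the time-averaged bound of Lemma~\ref{app:lemma:energy2}. The only differences are cosmetic (you track the trilinear constant $c_2$ explicitly and quote the averaged energy bound with a slightly different constant), which affect only the unspecified constant $C$ in the threshold for $N$.
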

\begin{proof}
Staying with the notation~\eqref{eqn:notation-H}--\eqref{eqn:notation-V}, 
we begin with equation~\eqref{eqn:weak_diff}, 
employing inequality~\eqref{app:eqn:lady_2d}
from Theorem~\ref{app:lemma:lady} in Appendix~\ref{app:technical},
along with Cauchy-Schwarz and Young's inequalities, to yield 
\begin{align*}
\frac{1}{2} \frac{d}{dt} \|w\|_H^2 + \nu \| w \|_V^2
& \leqs \|u\|_V |w|_H \| w \|_V
    + \|f-g\|_{V'} \|w\|_V
\\
& \leqs \frac{1}{\nu} \|u\|_V^2 \|w\|_H^2
  + \frac{1}{\nu} \|f-g\|_{V'}^2
  + \frac{\nu}{2} \|w\|_V^2.
\end{align*}
Equivalently, this is
$$
\frac{d}{dt} \|w\|_H^2 + \nu \| w \|_V^2
 -  \frac{2}{\nu} \|u\|_V^2 \|w\|_H^2
\leqs \frac{2}{\nu} \|f-g\|_{V'}^2.
$$
To bound the second term on the left from below, we employ 
the approximation inequality~\eqref{eqn:approximation-inequality-2}
from Lemma~\ref{lemma:approximation},
which yields
$$
\frac{d}{dt} \|w\|_H^2 
  +  \left( \frac{\nu N^{2\gamma}}{2 C_1^2} 
  -   \frac{2}{\nu} \|u\|_V^2 \right)
      \|w\|_H^2 
\leqs \frac{2}{\nu} \|f-g\|_{V'}^2
+ \frac{\nu N^{2\gamma}}{C_1^2} \|R_N w\|_{L^2(\Omega)}^2.
$$
This is a differential inequality of the form
$$
\frac{d}{dt} \|w\|_H^2 + \alpha \|w\|_H^2 \leqs \beta,
$$
with obvious definition of $\alpha$ and $\beta$.

The Generalized Gronwall Lemma~\ref{app:lemma:gronwall-generalized} 
can now be applied.
Recall that we have assumed both $\|f-g\|_{V'} \rightarrow 0$ and
$\|R_N w\|_{L^2(\Omega)} \rightarrow 0$ 
as $t \rightarrow \infty$.
Since it is assumed that $u$ and $v$, and hence $w$, are in $V$,
so that all other terms appearing in $\alpha$ and $\beta$ remain bounded,
it must hold that
$$
\lim_{t \rightarrow \infty}
     \frac{1}{T} \int_{t}^{t+T} \beta^+(\tau) d\tau = 0,
\ \ \ \ \ \ \ \ 
\limsup_{t \rightarrow \infty}
    \frac{1}{T} \int_{t}^{t+T} \alpha^-(\tau) d\tau <\infty.
$$
It remains to verify that for some fixed $T>0$,
$$
\limsup_{t \rightarrow \infty}
    \frac{1}{T} \int_{t}^{t+T} \alpha(\tau) d\tau >0.
$$
This means we must verify the following inequality for some fixed $T > 0$:
\begin{align}
   \label{eqn:key}
N^{2\gamma}
&> 
\frac{2 C_1^2}{\nu} \left( \limsup_{t \rightarrow \infty}
\frac{1}{T} \int_{t}^{t+T} \frac{2 \|u\|_V^2}{\nu} d\tau
\right)
= \frac{4 C_1^2}{\nu^2} 
\limsup_{t \rightarrow \infty} \frac{1}{T}
      \int_{t}^{t+T} \|u\|_V^2 d\tau.
\end{align}
The following {\em a priori} bound on any weak solution 
can be shown to hold 
(Lemma~\ref{app:lemma:energy2} in 
Appendix~\ref{app:apriori-constant-viscosity}):
\begin{align}
\limsup_{t \rightarrow \infty} \frac{1}{T} \int_{t}^{t+T} \|u(\tau)\|_H^2 d\tau 
& \leqs \frac{2}{\nu^2} \limsup_{t \rightarrow \infty} \|f(t)\|_{V'}^2,
\label{eqn:energy-for-thm}
\end{align}
for any $T > c_{\rho}^2/\nu > 0$,
where $c_{\rho}$ is the best constant from the Poincar\'{e}
inequality (Lemma~\ref{app:lemma:poincare} in Appendix~\ref{app:technical}).
Therefore, if
\begin{align}
N^{2\gamma}
&> 8 C_1^2 \left( 
  \frac{1}{\nu^2} 
   \limsup_{t \rightarrow \infty} \|f(t)\|_{V'}
\right)^2
\geqs \frac{4 C_1^2}{\nu^2}
   \left( \frac{2}{\nu^2}
  \limsup_{t \rightarrow \infty} \|f(t)\|_{V'}^2
\right),
\end{align}
implying that~\eqref{eqn:key} holds, then by the Gronwall 
Lemma~\ref{app:lemma:gronwall-generalized}, it follows that
$$
\lim_{t \rightarrow \infty} \|w(t)\|_H
   = \lim_{t \rightarrow \infty} \| u(t) - v(t) \|_H = 0.
$$
\end{proof}
\begin{remark}
Theorem~\ref{thm:main-2d} for the $d=2$ case can be extended to $d=3$ in several ways, which we will not reproduce here.
For example, in~\cite{HoTi96b} a finite energy dissipation assumption was used to extend Theorem~\ref{thm:main-2d} to $d=3$ case; a different approach for the $d=3$ case is taken in~\cite{Hols95b}.
\end{remark}
\begin{remark}
In the case of constant viscosity, the required \emph{a priori}
estimate in~\eqref{eqn:energy-for-thm} in the proof of 
Theorem~\ref{thm:main-2d} is provided by 
Lemma~\ref{app:lemma:energy2} in 
Appendix~\ref{app:apriori-constant-viscosity}).
For the case of time-varying viscosity, the required estimate is provided by
Lemma~\ref{lemma:energy2-time} or
Lemma~\ref{lemma:energy3-time}
in Section~\ref{sec:time-varying-estimates}.
For the case of our simple model of space-varying viscosity, 
the required estimate is provided by
Proposition~\ref{proposition:energy2-space}
in Section~\ref{sec:space-varying}.
\end{remark}

\section{A priori estimates for time-varying viscosity}
\label{sec:time-varying-estimates}

In this section, we develop the \emph{a priori} estimates needed to apply the 
determining projection framework from~\S\ref{sec:approx-theory}
(Theorem~\ref{thm:main-2d}) to the case of a time-varying viscosity function 
$\nu = \nu(t)$. 
The first is an $L^2$ estimate that is used to prove the second and third estimates, following the strategy for the case of constant viscosity (see Appendix~\ref{app:apriori-constant-viscosity}).
The second estimate is what is needed for use with with Theorem~\ref{thm:main-2d} in different contexts.
The time-varying viscosity is assumed to satisfy $\nu \in L^1(0,T)$
and obey the \emph{a priori} pointwise bounds:
\begin{align}
\label{eqn:time-nu-1}
0 < \underline{\nu} & \leqs \nu(t) \leqs \overline{\nu} < +\infty,
\quad \forall t \in (0,T),
\end{align}
where
\begin{align}
\label{eqn:time-nu-2}
\underline{\nu} = \inf_{0 < t < T} \nu(t),
&
\qquad
\overline{\nu}  = \sup_{0 < t < T} \nu(t).
\end{align}

The first estimate gives a bound on the $L^2$-norm of a weak
solution to~\eqref{eqn:nv}--\eqref{eqn:nv_bc}.
\begin{lemma}[{{\bf $L^2$-Estimates, time-varying viscosity}}]
   \label{lemma:energy1-time}
Let $\nu \in L^1(0,T)$ and assume
that~\eqref{eqn:time-nu-1}--\eqref{eqn:time-nu-2} hold.
Let $u \in L^2((0,T);V)$ be a weak solution of the Navier-Stokes 
equations~\eqref{eqn:nv}--\eqref{eqn:nv_bc}, 
with Lipschitz domain $\Omega \subset \mathbb{R}^d$, $d=2$ or $d=3$.
It holds that
\begin{equation}    \label{eqn:time-energy1}
       \limsup_{t\rightarrow \infty}
       \|u(t) \|_{H}^2 
       \leqs \frac{\bar{K}}{\underline{\nu}}
       \limsup_{t \rightarrow \infty}
       \|f(t) \|_{V'}^2 
\end{equation}
where \(K = \limsup_{t \rightarrow \infty} \int_{0}^{t} e^{-\phi_{s}(t)) / c_\rho^2} ds\), \(\phi_{s}(t) = \int_{s}^t \nu(z)dz \), and \(c_{\rho}\) is the Poincar\'{e} constant.
\end{lemma}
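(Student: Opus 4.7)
The plan is to derive a scalar energy inequality by testing the weak formulation~\eqref{eqn:weak} against the solution itself, and then to solve the resulting linear first-order differential inequality explicitly using a time-dependent integrating factor.

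First, setting $v=u$ in~\eqref{eqn:weak} and combining the antisymmetry $b(u,u,u)=0$ from Lemma~\ref{app:lemma:symmetries} with the distributional identity~\eqref{eqn:dbydt}, I would obtain
$$\tfrac{1}{2}\tfrac{d}{dt}\|u\|_H^2 + \nu(t)\|u\|_V^2 = \langle f,u\rangle.$$
Applying Cauchy--Schwarz and then Young's inequality with the $t$-dependent split $\langle f,u\rangle \leqs \tfrac{1}{2\nu(t)}\|f\|_{V'}^2 + \tfrac{\nu(t)}{2}\|u\|_V^2$ absorbs half the dissipation into the right-hand side, and invoking the Poincar\'{e} inequality $\|u\|_V^2 \geqs c_\rho^{-2}\|u\|_H^2$ on the remaining dissipation term reduces the problem to the scalar linear ODE inequality
$$y'(t) + \tfrac{\nu(t)}{c_\rho^2}\, y(t) \leqs \tfrac{1}{\nu(t)}\,\|f(t)\|_{V'}^2, \qquad y(t):=\|u(t)\|_H^2.$$

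Next, I would multiply by the integrating factor $\exp\!\left(\int_0^t \nu(z)/c_\rho^2\,dz\right)$ and integrate over $[0,t]$. The difference of antiderivatives cleanly produces $\phi_s(t)=\int_s^t \nu(z)\,dz$, yielding the explicit representation
$$y(t) \leqs e^{-\phi_0(t)/c_\rho^2}\, y(0) + \int_0^t e^{-\phi_s(t)/c_\rho^2}\,\tfrac{1}{\nu(s)}\,\|f(s)\|_{V'}^2\,ds.$$
The lower bound $\nu(s)\geqs \underline{\nu}$ from~\eqref{eqn:time-nu-1}--\eqref{eqn:time-nu-2} then pulls out the $1/\underline{\nu}$ prefactor appearing in~\eqref{eqn:time-energy1}.

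The main obstacle, which I would handle last, is extracting the factor $\limsup_{t\to\infty}\|f(t)\|_{V'}^2$ from inside the exponentially weighted convolution integral. The initial-data contribution is harmless, since the lower bound $\phi_0(t)\geqs \underline{\nu}\,t$ forces $e^{-\phi_0(t)/c_\rho^2}\,y(0)\to 0$. For the forcing term, I would fix $\epsilon>0$, pick $T_0$ large enough that $\|f(s)\|_{V'}^2 \leqs \limsup_{t\to\infty}\|f(t)\|_{V'}^2 + \epsilon$ for all $s\geqs T_0$, and split the integral as $\int_0^{T_0}+\int_{T_0}^t$. The head segment is dominated by a constant multiple of $e^{-\underline{\nu}(t-T_0)/c_\rho^2}$ and vanishes as $t\to\infty$, while the tail is bounded above by $\tfrac{1}{\underline{\nu}}\bigl(\limsup\|f\|_{V'}^2+\epsilon\bigr)\int_0^t e^{-\phi_s(t)/c_\rho^2}\,ds$. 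Passing to $\limsup_{t\to\infty}$ identifies the remaining integral with the constant $\bar K$ (the $\limsup$ defining $K$ in the statement), and finally sending $\epsilon\to 0$ produces the claimed estimate~\eqref{eqn:time-energy1}.
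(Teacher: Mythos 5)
Your proposal is correct and follows essentially the same route as the paper: test with $u$, use $b(u,u,u)=0$, Young plus Poincar\'{e} to get the scalar inequality $y' + (\nu(t)/c_\rho^2)y \leqs \nu(t)^{-1}\|f\|_{V'}^2$, and then the classical Gronwall lemma with the integrating factor built from $\phi_s(t)=\int_s^t\nu$. The only (cosmetic) difference is in extracting $\limsup_{t\to\infty}\|f(t)\|_{V'}^2$ from the convolution integral: the paper runs Gronwall from an arbitrary starting time $r$ and implicitly lets $r\to\infty$, while you integrate from $0$ and do an explicit $\epsilon$--$T_0$ splitting of the tail; your version is, if anything, slightly more careful at that step.
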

\begin{proof}
Beginning with equation \eqref{eqn:weak} for $v=u$, 
using~\eqref{eqn:dbydt}, and noting 
that Theorem~\ref{app:lemma:symmetries} in Appendix~\ref{app:technical}
ensures $b(u,u,u)=0$, we are left with
\begin{align*}
\frac{1}{2} \frac{d}{dt}\|u\|_H^2
   + \nu \| u \|_V^2
& \leqs \|f\|_{V'} \|u\|_V.
\end{align*}
Applying Young's inequality leads to
\begin{align*}
\frac{d}{dt}\|u\|_H^2
   + 2 \nu \| u \|_V^2 
 & \leqs \left( \sqrt{\frac{2}{\nu}} \|f\|_{V'} \right)
         \left( \sqrt{2 \nu} \|u\|_{V} \right)
  \leqs \frac{1}{\nu} \|f\|_{V'}^2
    + \nu \|u\|_{V}^2,
\end{align*}
which gives then
\begin{align}
\frac{d}{dt}\|u\|_H^2
  + \nu \|u\|_{V}^2 
& \leqs \frac{1}{\nu} \|f\|_{V'}^2.
   \label{eqn:time-weak-start1}
\end{align}
Employing the Poincar\'{e} inequality we end up with
\begin{align*}
\frac{d}{dt}\|u\|_H^2
  + \frac{\nu}{c_{\rho}^2} \|u\|_H^2 
& \leqs \frac{1}{\nu} \|f\|_{V'}^2.
\end{align*}
This is a differential inequality for $\|u(t)\|_H^2$,
and by Gronwall's Inequality (Lemma~\ref{app:lemma:gronwall-classical}) 
it holds that
\begin{align*}
\|u(t)\|_H^2 
& \leqs 
   \|u(r)\|_H^2 e^{-\int_r^t \nu(\tau)/c_{\rho}^2 d\tau}
    + \int_r^t \frac{1}{\nu(s)} \|f(s)\|_{V'}^2
        e^{-\int_{s}^t \nu(\tau)/c_{\rho}^2 d\tau} ds
\\
& = \|u(r)\|_H^2 e^{-1/c_\rho^2 \int_{s}^{t} \nu(\tau) d\tau}
    + \int_r^t \frac{1}{\nu} e^{-1/c_\rho^2 \int_{s}^{t} \nu(\tau) d\tau}
        \|f(s)\|_{V'}^2 ds
\\
&   =  \|u(r)\|_H^2 e^{-\phi_r(t)/c_{\rho}^2}
    + \int_r^t \frac{1}{\nu} e^{-\phi_s(t)/c_{\rho}^2}
        \|f(s)\|_{V'}^2 ds
\\
&   \leqs  \|u(r)\|_H^2 e^{-\phi_r(t)/c_{\rho}^2}
    + \sup_{r\leqs \delta \leqs t} \frac{1}{\nu(\delta)}\|f(\delta)\|_{V'}^2 
\int_r^t e^{-\phi_s(t)/c_{\rho}^2} ds
\\
&    \leqs  \|u(r)\|_H^2 e^{\phi_r(t)/c_{\rho}^2}
    + \frac{K}{\underline{\nu}} \sup_{r\leqs \delta \leqs t} \|f(\delta)\|_{V'}^2 
 ,
\end{align*}
where \(\phi_c(t) = \int_c^t \nu(z)dz\) and \(K = \int_0^t e^{-\phi_s(t)/c_\rho^2}ds\),
which must hold for every $r \in (0,t]$.
Taking the $\limsup_{t\rightarrow \infty}$ of both
sides of the inequality leaves~\eqref{eqn:time-energy1} with obvious definition of \(\bar{K}\).
\end{proof}


The following estimate gives the a bound on the time-averaged $H^1$-semi-norm 
of a weak solution to~\eqref{eqn:nv}--\eqref{eqn:nv_bc}.
\begin{lemma}[{{\bf Time-averaged $H^1$-Estimates, time-varying viscosity}}]
   \label{lemma:energy3-time}
Let \\ $u \in L^2((0,T);V)$ be a weak solution of the Navier-Stokes 
equations ~\eqref{eqn:nv}--\eqref{eqn:nv_bc},
with Lipschitz domain $\Omega \subset \mathbb{R}^d$, $d=2$ or $d=3$.
Then for every $T$ with $T \geqs c_{\rho}^2/\underline{\nu} > 0$ it holds that
\begin{equation}
\label{eqn:time-energy3}
       \limsup_{t \rightarrow \infty}
       \frac{1}{T}
       \int_{t}^{t+T}
       \|u(\tau)\|_V^2
       d\tau
\leqs
       \frac{\bar{K}\underline{\nu}^3 + c_{\rho}^2}{\underline{\nu}^2 c_{\rho}^2}
       \limsup_{t \rightarrow \infty}
       \|f(t) \|_{V'}^2
\end{equation}
where \(\bar{K} = \limsup_{t \rightarrow \infty} \int_{0}^{t} e^{-\phi_{s}(t)) / c_\rho^2} ds\), \(\phi_{s}(t) = \int_{s}^t \nu(z)dz \), and \(c_{\rho}\) is the Poincar\'{e} constant.
\end{lemma}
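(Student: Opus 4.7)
The plan is to avoid re-running a Gronwall argument and instead obtain the time-averaged $V$-bound directly by integrating the basic energy inequality over a window of length $T$, mirroring the constant-viscosity argument in Appendix~\ref{app:apriori-constant-viscosity} (Lemma~\ref{app:lemma:energy2}), but keeping the time-varying viscosity $\nu(t)$ explicit and exploiting its pointwise bounds~\eqref{eqn:time-nu-1}.

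First, I would recycle the intermediate differential inequality~\eqref{eqn:time-weak-start1} already derived inside the proof of Lemma~\ref{lemma:energy1-time}, namely
$$
\frac{d}{dt}\|u\|_H^2 + \nu(t)\,\|u\|_V^2 \leqs \frac{1}{\nu(t)}\,\|f\|_{V'}^2,
$$
and integrate it over the interval $[t,t+T]$. Dropping the nonnegative boundary term $\|u(t+T)\|_H^2$ and using $\nu(\tau)\geqs \underline{\nu}$ on the left and $1/\nu(\tau)\leqs 1/\underline{\nu}$ on the right, I obtain
$$
\underline{\nu}\int_t^{t+T}\|u(\tau)\|_V^2\,d\tau
\leqs \|u(t)\|_H^2 + \frac{1}{\underline{\nu}}\int_t^{t+T}\|f(\tau)\|_{V'}^2\,d\tau.
$$

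Second, I would divide through by $\underline{\nu}\,T$ and apply $\limsup_{t\to\infty}$. The forcing average contributes $\underline{\nu}^{-2}\limsup_{t\to\infty}\|f(t)\|_{V'}^2$ by the standard fact that $\limsup_{t\to\infty}T^{-1}\int_t^{t+T}\|f\|_{V'}^2\,d\tau \leqs \limsup_{t\to\infty}\|f(t)\|_{V'}^2$. For the pointwise initial-data term $\|u(t)\|_H^2/(\underline{\nu}T)$, I would invoke Lemma~\ref{lemma:energy1-time} to replace $\limsup\|u(t)\|_H^2$ by $(\bar{K}/\underline{\nu})\limsup\|f(t)\|_{V'}^2$ and then use the hypothesis $T \geqs c_{\rho}^2/\underline{\nu}$ (equivalently $1/T\leqs \underline{\nu}/c_{\rho}^2$) to absorb the leftover $1/T$. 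Placing both contributions over the common denominator $\underline{\nu}^2 c_{\rho}^2$ yields the announced bound~\eqref{eqn:time-energy3}.

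The main obstacle is almost purely bookkeeping: no new analytic ingredient beyond Lemma~\ref{lemma:energy1-time} and the pointwise viscosity bounds is required, and the window-size threshold $T\geqs c_{\rho}^2/\underline{\nu}$ reduces in the constant-viscosity limit to the classical $T>c_{\rho}^2/\nu$ threshold of Lemma~\ref{app:lemma:energy2}. The only subtle point is that $\nu(\tau)$ cannot be pulled out of the integrals pointwise, so one must consistently use the worst-case constants $\underline{\nu}$ (from below against $\int \nu\|u\|_V^2$) and $1/\underline{\nu}$ (from above against $\int \nu^{-1}\|f\|_{V'}^2$); conflating these two roles is the easy way to end up with the wrong exponent on $\underline{\nu}$ in the final constant.
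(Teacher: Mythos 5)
Your argument is correct, and it takes a mildly but genuinely different route from the paper's. The paper first divides \eqref{eqn:time-weak-start1} by $\nu(\tau)$ so that the viscous term becomes exactly $\int_t^{t+T}\|u(\tau)\|_V^2\,d\tau$, and then integrates $\int_t^{t+T}\nu(\tau)^{-1}\tfrac{d}{d\tau}\|u(\tau)\|_H^2\,d\tau$ by parts in time, discarding the resulting interior term as nonnegative; as written that step implicitly requires $\nu$ to be differentiable with $\nu'\geqs 0$ (the term is really $\int_t^{t+T}\nu'(\tau)\nu(\tau)^{-2}\|u(\tau)\|_H^2\,d\tau$, not $\int_t^{t+T}\nu(\tau)^{-2}\|u(\tau)\|_H^2\,d\tau$), neither of which is among the stated hypotheses ($\nu\in L^1$ with pointwise bounds). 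You avoid this entirely by integrating \eqref{eqn:time-weak-start1} as it stands --- which is essentially the computation of Lemma~\ref{lemma:energy2-time} --- and then using only $\underline{\nu}\leqs\nu(\tau)\ $ and $\ \nu(\tau)^{-1}\leqs\underline{\nu}^{-1}$ under the integrals, so your proof is valid under exactly the stated assumptions and is, if anything, more robust than the paper's. After dividing by $\underline{\nu}T$, invoking Lemma~\ref{lemma:energy1-time}, and using $1/T\leqs\underline{\nu}/c_\rho^2$, your bookkeeping lands on the constant
$$
\frac{\bar K\,\underline{\nu}+c_\rho^2}{\underline{\nu}^2 c_\rho^2}\,,
$$
which is precisely what the paper's own computation also produces in the last two lines of its proof. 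So the exponent $\underline{\nu}^3$ in the stated bound \eqref{eqn:time-energy3} is not something either argument delivers; it appears to be a typo in the statement, and the sanity check $\nu\equiv\mathrm{const}$, $\bar K=c_\rho^2/\underline{\nu}$ recovers the classical $2/\nu^2$ of Lemma~\ref{app:lemma:energy2} only with $\bar K\,\underline{\nu}$ in the numerator --- so your constant is the one that should be trusted. Your closing caution about not conflating the roles of $\underline{\nu}$ as a lower bound on $\nu$ and $1/\underline{\nu}$ as an upper bound on $1/\nu$ is exactly the right thing to watch.
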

\begin{proof}
We begin with~\eqref{eqn:time-weak-start1}, which was
\begin{align*}
\frac{d}{dt}\|u\|_H^2
  + \nu \|u\|_{V}^2 
& \leqs \frac{1}{\nu} \|f\|_{V'}^2.
\end{align*}
Dividing by \(\nu(t)\) and integrating from $t$ to $t+T$ with $T>0$ gives
\begin{align*}
\int_{t}^{t+T} \frac{1}{\nu(t)} \frac{d}{dt} \|u(\tau)\|_H^2 d\tau
   + \int_t^{t+T} \|u(\tau)\|_V^2 d\tau 
& \leqs  \int_t^{t+T} \frac{1}{\nu^2(\tau)}\|f(\tau)\|_{V'}^2 d\tau.
\end{align*}
Integrating by parts the left-most term gives
\[
\frac{1}{\nu(t+T)} \|u(t+T)\|_H^2
- \frac{1}{\nu(t)} \|u(t)\|_H^2
 + \int_t^{t+T} \frac{1}{\nu^2(\tau)} \|u(\tau)\|_H^2 d\tau
 \]
 \[
+ \int_t^{t+T} \|u(\tau)\|_V^2 d\tau
\leq \int_{t}^{t+T} \frac{1}{\nu^2(\tau)} \|f(\tau)\|_{V'}^2 d\tau
\]
Dropping the positive first and third terms on the left and bounding the integral
on the right gives
\begin{align*}
\int_t^{t+T} \|u(\tau)\|_V^2 d\tau 
& \leqs \frac{1}{\nu(t)}\|u(t)\|_H^2
  + T \sup_{t\leqs s \leqs t+T} \frac{1}{\nu^2(s)}\|f(s)\|_{V'}^2.
\end{align*}
Taking the $\limsup_{t \rightarrow \infty}$ of both sides, and
dividing by $T$, gives
\begin{align*}
\limsup_{t \rightarrow \infty}
\frac{1}{T}
\int_t^{t+T}\|u(\tau)\|_V^2 d\tau 
& \leqs \frac{1}{ T} 
  \limsup_{t \rightarrow \infty} \frac{1}{\nu(t)}\|u(t)\|_H^2
  + \limsup_{t\rightarrow \infty}
          \frac{1}{\nu^2(t)}\|f(t)\|_{V'}^2
 \\
  & \leqs \frac{1}{\underline{\nu}T}
  	   \limsup_{t\rightarrow \infty} \|u(t)\|_H^2
       + \frac{1}{\underline{\nu}^2}
       \limsup_{t\rightarrow \infty} \|f(t)\|_{V'}^2.
\end{align*}
Using the estimate from Lemma~\ref{lemma:energy1-time} and bounding the right-most term gives then 
\begin{align*}
\limsup_{t \rightarrow \infty} \frac{1}{T}
\int_t^{t+T} \|u(\tau)\|_V^2 d\tau 
& \leqs \left( \frac{1}{\underline{\nu}T}\cdot\frac{K}{\underline{\nu}} + \frac{1}{ \underline{\nu}^2} \right)
  \limsup_{t \rightarrow \infty} \|f(t)\|_{V'}^2.
\end{align*}
Since $T \geqs c_{\rho}^2 / \underline{\nu} > 0$, 
we end up with~\eqref{eqn:time-energy3}.
\end{proof}

\begin{remark}
If one takes $\nu(t) \equiv c$, we find that 
$K=c_{\rho}^2 / \underline{\nu} $,
recovering the bounds for constant viscosity in both of the above estimates for time-varying viscosity
(see Appendix~\ref{app:apriori-constant-viscosity}).
\end{remark}

\section{Weak formulation and estimates for space-varying viscosity}
\label{sec:space-varying}

The remaining two sections of the notes are first steps to understanding the 
case of space-varying viscosity, and are both 
\emph{preliminary} and \emph{somewhat speculative}.
In this section, we attempt to develop a weak formulation that is appropriate 
for viscosity that can vary with the spatial location.
In the next section, and we establish some preliminary \emph{a priori} bounds 
for the space-varying case.
As was the case for the time-varying viscosity, we will make some basic
assumptions on the now space-varying viscosity:
\begin{align}
\label{eqn:space-nu-1}
0 < \underline{\nu} & \leqs \nu(x) \leqs \overline{\nu} < +\infty,
\quad \forall x \in \Omega,
\end{align}
where
\begin{align}
\label{eqn:space-nu-2}
\underline{\nu} = \inf_{x \in \Omega} \nu(x),
&
\qquad
\overline{\nu}  = \sup_{x \in \Omega} \nu(x).
\end{align}

Let us consider the effects of a space-varying viscosity on equations 
\eqref{eqn:weak} and \eqref{eqn:weak_ic}.
Our interest here is to develop a weak formulation analogous 
to~\eqref{eqn:weak}, but in which the viscosity is allowed to be 
space-varying, with its gradient is not necessarily zero.
Unlike with the time-dependent case, the NSE will 
now require an extra term $\nabla \nu \cdot \nabla u$ that we 
will call the \emph{viscosity-velocity divergence term}.
If $\nabla \nu \neq 0$, then we must consider $\nu A$ to be the modified 
Stokes operator.
We will assume that $\nu \in W$, where $W$ is an appropriate Banach space 
that will be determined later, such that $\nu A$ remains a bounded linear map,
where $A$ is the stokes operator as in the earlier discussion.
The term $\nu a(u, \eta)$ appearing in~\eqref{eqn:weak}
now becomes $a(\nu u, \eta)$.
We note that $\nu a(u, \eta)$ is bounded from below by $\nu |u|_{H^1(\Omega)}^2$ and $a(\nu u, \eta)$ is bounded from below by $|\nu u|_{H^1(\Omega)}^2$.

We begin with the NSE without consideration of viscosities $\lambda$ and $\nu$  as constants.
This can be written as follows from ~\cite{SeHa61}:
\begin{equation*}
\rho \frac{Du}{Dt}
= \rho f - \nabla p
+ \nabla (\lambda \nabla \cdot u)
+ \nabla \cdot (2 \nu D)
\end{equation*}
where $D_{rs} = 
\frac{1}{2}
(\frac{\partial u_r}{\partial x_s}
+ \frac{\partial u_s}{\partial x_r}
)$ is the symmetric component of the gradient of velocity, often referred to as the deformation tensor or rate-of-strain tensor, and $\frac{Du}{Dt} = \frac{\partial u}{\partial t}
+ u \cdot \nabla u$ is the material derivative.
The following hold:
\begin{align*}
\nabla \cdot (2 \nu D) &= \nabla (\nu \nabla \cdot u) 
     + \nabla \cdot (\nu \nabla u), 
\\
\nabla \cdot (\nu \nabla u)
&= [\nu, u] + \nu \nabla ^2 u,
\end{align*}
where $[\nu, u]_i = \nabla \nu \cdot \nabla u_i$ (we will use $\nabla \nu \cdot \nabla u$ to denote this term)
, and (assuming the fluid is incompressible) 
$$\nabla \cdot u = 0.$$
With this, we can write  
\begin{align}
\rho \frac{Du}{Dt}
&= \rho f - \nabla p
+ \nu \nabla ^2 u + \nabla \nu \cdot \nabla u
\label{eqn:ns-wk-space}
\\
u(0) &= u_0
\label{eqn:ns-wk-space-bc}
\end{align}

Multiplying both sides by a test function $\eta:\mathbb{R}^3 \rightarrow \mathbb{R}$, integrating over an appropriate domain $\Omega \subset \mathbb{R}^3$, we find that 
\begin{equation*}
\int_{\Omega} \rho \frac{\partial u}{\partial t} \cdot \eta
- \int_{\Omega} \nu \nabla^2 u \cdot \eta
+ \int_{\Omega} \rho (u \cdot \nabla) u \cdot \eta
+ \int_{\Omega} \nabla p \cdot \eta
- \int_{\Omega} (\nabla \nu \cdot \nabla u) \cdot \eta
= \int_{\Omega} \rho f \cdot \eta
\end{equation*}
We reverse-integrate by parts the diffusive term \(\mu \nabla^2 u \cdot \eta\) and pressure term \(\nabla p \cdot \eta\), and use the Divergence Theorem:
\begin{align*}
\int_{\Omega} \nabla p \cdot \eta
&= -\int_{\Omega} p \nabla \cdot \eta
   + \int_{\partial \Omega} p\eta \cdot \hat{n}
\\
-\int_{\Omega} \nabla^2 u \cdot \nu \eta
&= \int_{\Omega} \nabla u \cdot \nabla(\nu \eta)
   - \int_{\partial \Omega} \nu \frac{\partial u}{\partial \hat{n}} \cdot \eta
\\
&= \int_{\Omega} \nu \nabla u \cdot \nabla \eta
 + \int_{\Omega} \nabla u \nabla \nu \eta
 - \int_{\partial \Omega} \frac{\partial u}{\partial \hat{n}} \cdot \nu \eta
\end{align*}
Substituting back, the result is 
\begin{align*}
\int_{\Omega} \rho \frac{\partial u}{\partial t} \cdot \eta
&+ \int_{\Omega} \nu \nabla u \cdot \nabla \eta
+ \int_{\Omega} \nabla u \cdot \nabla \nu \eta
+ \int_{\Omega} \rho(u \cdot \nabla)u \cdot \eta
\\
&\quad
- \int_{\Omega} p \nabla \cdot \eta
- \int_{\Omega} (\nabla \nu \cdot \nabla u) \cdot \eta
\\
&= \int_{\Omega} \rho f \cdot \eta
+\int_{\partial \Omega} (\nu \frac{\partial u}{\partial \hat{n}} - p \hat{n}) \cdot \eta
\end{align*}
Choosing the test function $\eta$ so that $\eta = 0$ on $\partial \Omega$ removes the term involving the boundary integral.
The divergence constraint \(\nabla \cdot u = 0\) is now \(\int_{\Omega} q \nabla \cdot u = 0 \) \(\forall q \in Q = L^2(\Omega)\) .
With this, we can write the weak formulation of the NSE:
\\
Find \(u \in U = L^2((0,T);V)\) such that
\begin{align*}
\int_{\Omega} \rho \frac{\partial u}{\partial t} \cdot \eta
+ \int_{\Omega} \nu \nabla u \cdot \nabla \eta
& 
+ \int_{\Omega} \nabla u \cdot \nabla \nu \eta
+ \int_{\Omega} \rho(u \cdot \nabla)u \cdot \eta
\\
&
\qquad
- \int_{\Omega} p \nabla \cdot \eta
- \int_{\Omega} (\nabla \nu \cdot \nabla u) \cdot \eta
\\
& = \int_{\Omega} \rho f \cdot \eta,
\quad \forall \eta \in V = \mathbf{H}^1_0(\Omega),
\\
\int_{\Omega} q \nabla \cdot u &= 0,
    \quad \forall q \in Q = L^2(\Omega).
\end{align*}
Employing again the Leray orthogonal projector $P$ to incorporate
the divergence constraint into our functional framework, we have
the final weak formulation that allows for variable viscosity:
Given $f \in L^2((0,T);H)$, if $u \in L^2((0,T);V)$ satisfies
\begin{align}
   \label{eqn:weak-b}
\left( \frac{du}{dt}, \eta \right)
    + a(\nu u,\eta) + b(u,u,\eta) 
   - (a(\nu, u), \eta)
    &= (f,\eta), 
      ~~~\forall \eta \in V,
\\
u(0)&=u_0
\end{align}
then $u$ will be called a weak solution of the NSE 
with space-varying viscosity.


To go further with this analysis, we will need some type of a bound from below 
for the rather inconvenient term $a(\nu u, u)$, 
involving something more useful, such as some multiple of 
$\|u\|_{H^1(\Omega)}$.
Under suitable regularity assumptions on $\nu$ and $u$, we have the following.
\begin{proposition}
\label{prop:space-varying-coercivity}
Let $\nu \in H^1(\Omega)$ satisfy
\eqref{eqn:space-nu-1}--\eqref{eqn:space-nu-2},
and let $u \in V \cap H^2(\Omega)$.  Then
$$
a(\nu u, u) \geqs \underline{\nu} |u|_{H^1(\Omega)}^2.
$$
\end{proposition}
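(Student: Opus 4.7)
The plan is to expand $a(\nu u, u) = (\nabla(\nu u), \nabla u)_H$ via the product rule $\partial_j(\nu u_i) = \nu\,\partial_j u_i + u_i\,\partial_j \nu$, producing the decomposition
\[
a(\nu u, u) \;=\; \int_\Omega \nu |\nabla u|^2 \, dx \;+\; \int_\Omega \nabla \nu \cdot \nabla\!\bigl(\tfrac{1}{2}|u|^2\bigr) \, dx.
\]
The first integral immediately furnishes the target bound: the pointwise inequality $\nu(x) \geqs \underline{\nu}$ from \eqref{eqn:space-nu-1} yields $\int_\Omega \nu |\nabla u|^2 \, dx \geqs \underline{\nu}\, |u|_{H^1(\Omega)}^2$. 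Everything then reduces to controlling the cross term.

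For that cross term I would integrate by parts, exploiting $u \in V \cap H^2(\Omega)$: since $u$ vanishes on $\partial \Omega$, so do $|u|^2$ and the normal derivative $\partial_{\hat{n}}|u|^2 = 2 u \cdot \partial_{\hat{n}} u$, so the boundary contribution drops out. The $H^2$ regularity of $u$ combined with $\nu \in H^1$ (and the embedding $H^2 \hookrightarrow L^\infty$ for $d \in \{2,3\}$, which gives $|u|^2 \in H^1$) makes the interior pairing well defined, producing the formal residual $-\tfrac{1}{2}\int_\Omega |u|^2 \Delta \nu\, dx$, to be interpreted by duality if $\Delta \nu$ is only in $H^{-1}$.

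The main obstacle is the sign of this residual: the pointwise bounds \eqref{eqn:space-nu-1}--\eqref{eqn:space-nu-2} alone do not pin down the sign of $\Delta \nu$, so non-negativity of the cross term is not automatic. The most natural way to close the argument, and what I suspect is implicitly in play, is to observe that the weak formulation \eqref{eqn:weak-b} tested with $\eta = u$ produces the combination $a(\nu u, u) - (a(\nu, u), u)$, and the second piece $(a(\nu, u), u) = \int_\Omega \nabla \nu \cdot \nabla(\tfrac{1}{2}|u|^2) \, dx$ cancels the cross term exactly. So if the proposition is read as a bound on the effective coercive form appearing in \eqref{eqn:weak-b}, the result is immediate; otherwise one would need either a structural assumption such as $\Delta \nu \leqs 0$ (subharmonic $\nu$), or a smallness condition on $\nabla \nu$ allowing Cauchy--Schwarz and Young absorption into the strictly positive excess $\int_\Omega (\nu - \underline{\nu}) |\nabla u|^2\, dx$.
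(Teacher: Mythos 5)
Your decomposition is exactly the paper's: both you and the authors expand $a(\nu u,u)=\int_\Omega \nu|\nabla u|^2+\int_\Omega (\nabla\nu)\cdot u\,\nabla u$ and dispose of the first term via the pointwise bound \eqref{eqn:space-nu-1}. Where you diverge is on the cross term, and your skepticism is warranted. The paper asserts $\int_\Omega(\nabla\nu)\cdot u\,\nabla u\geqs 0$ and argues by integrating by parts to obtain
$$
\int_\Omega(\nabla\nu)\cdot u\,\nabla u \;\geqs\; -\int_\Omega \nu\, u\cdot\nabla^2 u \;-\;\overline{\nu}\,|u|_{H^1(\Omega)}^2,
$$
and then claims it suffices to show $\int_\Omega u\cdot\nabla^2 u+|u|_{H^1(\Omega)}^2\leqs 0$ (which holds with equality). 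That reduction is invalid: the needed inequality $\int_\Omega\nu\,u\cdot\nabla^2u+\overline{\nu}|u|^2_{H^1(\Omega)}\leqs0$ does not follow from $\int_\Omega u\cdot\nabla^2u+|u|^2_{H^1(\Omega)}=0$ unless $\nu$ is constant, because the integrand $u\cdot\nabla^2u$ has no pointwise sign, so the variable weight $\nu$ cannot be replaced by $\overline{\nu}$. Your identification of the obstruction is the correct one: with $u$ vanishing on $\partial\Omega$ the cross term equals $-\tfrac12\int_\Omega|u|^2\Delta\nu$, whose sign is not controlled by \eqref{eqn:space-nu-1}--\eqref{eqn:space-nu-2}; by localizing $\nu-\underline{\nu}$ near a point where $u\cdot\Delta u>0$ one can make the cross term negative and dominate the slack $\int_\Omega(\nu-\underline{\nu})|\nabla u|^2$. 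So the proposition is not provable from its stated hypotheses, and the paper's proof contains precisely the gap you anticipated.

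Your proposed repairs are also the natural ones. Reading the coercivity as a statement about the combination $a(\nu u,u)-(a(\nu,u),u)$ appearing in \eqref{eqn:weak-b} does give the exact cancellation you describe and the clean bound $\int_\Omega\nu|\nabla u|^2\geqs\underline{\nu}|u|^2_{H^1(\Omega)}$; note, however, that the paper's subsequent use in Proposition~\ref{proposition:energy1-space} does \emph{not} exploit this cancellation --- it keeps $\nabla\nu\cdot\nabla u$ as a separate right-hand-side term and relies on the coercivity of $a(\nu u,\cdot)$ alone --- so the issue propagates. Otherwise an additional structural hypothesis such as $\Delta\nu\leqs0$ in the distributional sense, or a smallness condition on $\nabla\nu$ absorbed into the excess viscosity, would be required. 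In short: your attempt does not close the proof, but that is because the statement itself needs amendment, not because your approach is deficient.
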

\begin{proof}
We first use the definition of the bilinear form $a(\cdot,\cdot)$ to write:
\begin{align*}
a(\nu u, u) 
& = (\nabla [\nu u], \nabla u)_{L^2(\Omega)}
= \int_{\Omega} \nabla [\nu u] \cdot \nabla u 
= \int_{\Omega} (\nabla \nu) u \nabla u 
+ \int_{\Omega} \nu (\nabla u)^2 
\\
& \geqs \int_{\Omega} (\nabla \nu) u \nabla u 
+ \underline{\nu} \int_{\Omega} (\nabla u)^2 
\\
& = \int_{\Omega} (\nabla \nu) u \nabla u 
+ \underline{\nu} |u|_{H^1(\Omega)}^2.
\end{align*}
We are done if we can show that \(\int_{\Omega} (\nabla \nu) u \nabla u  \geqs 0\).
To this end, we integrate by parts to find that 
\begin{align*}
\int_{\Omega} (\nabla \nu) u \nabla u 
& = \int_{\partial \Omega} \nu u \nabla u 
- \int_{\Omega} \nu u \nabla^2 u 
-\int_{\Omega} \nu (\nabla u)^2
\\
& \geqs \int_{\partial \Omega} \nu u \nabla u
- \int_{\Omega} \nu u \nabla^2 u
- \bar{\nu} |u|_{H^1(\Omega)}^2.
\end{align*}
Note that \(\int_{\partial \Omega} \nu u \nabla u\) vanishes due to the compact support of functions in $V$.
It remains to show that \(\int_{\Omega} \nu u \nabla^2 u + \bar{\nu} |u|_{H^1(\Omega)}^2 \leqs 0\); showing \(\int_{\Omega} u \nabla^2 u + |u|_{H^1(\Omega)}^2 \leqs 0\) will suffice.
Integrating by parts the integral term gives 
$$
\int_{\Omega} u \nabla^2 u
= \int_{\partial \Omega} u \nabla u
- \int_{\Omega} (\nabla u)^2
= -|u|_{H^1(\Omega)}^2.
$$
Thus, \(\int_{\Omega} u \nabla^2 u + |u|_{H^1(\Omega)}^2 \leqs 0\), which is what we were after.
\end{proof}

We now establish some basic preliminary \emph{a priori} bounds 
for the weak formulation of the simplified space-varying viscosity NSE model.

The first estimate gives a bound on the $L^2$-norm of a weak
solution to~\eqref{eqn:nv}--\eqref{eqn:nv_bc}.
\begin{proposition}[{{\bf $L^2$-Estimates, space-varying viscosity}}]
   \label{proposition:energy1-space}
Let $u \in L^2((0,T);V)$ be a weak solution of the Navier-Stokes 
equations~\eqref{eqn:ns-wk-space}--\eqref{eqn:ns-wk-space-bc}, 
with Lipschitz domain $\Omega \subset \mathbb{R}^d$, $d=2$ or $d=3$,
and assume that Proposition~\ref{prop:space-varying-coercivity} holds.
Then it holds that
\begin{equation}
   \label{eqn:energy1-space}
\limsup_{t\rightarrow \infty} \|u(t)\|_{H}^2 
\leqs  \frac{c_\rho^2}{2\underline{\nu}^2} \limsup_{t\rightarrow \infty} 
     \|f(t)\|_{V'}^2
     + \frac{c_\rho^2}{2\underline{\nu}^2}
     \limsup_{t \rightarrow \infty} \|\nabla \nu \cdot \nabla u(t)\|_{W}^2,
\end{equation}
where $c_\rho$ is the constant from the Poincar\'{e} inequality 
(Lemma~\ref{app:lemma:poincare} in Appendix~\ref{app:technical}),
and \(\underline{\nu} = \inf_{\Omega} \nu > 0\).
\end{proposition}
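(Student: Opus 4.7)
The plan is to follow the template of Lemma \ref{lemma:energy1-time} (the time-varying $L^2$ estimate) and of Lemma \ref{app:lemma:energy2}, now adapted to the space-varying weak formulation (\ref{eqn:weak-b}). First, I would test (\ref{eqn:weak-b}) with $\eta = u$. Combining this with (\ref{eqn:dbydt}) and the skew-symmetry $b(u,u,u) = 0$ from Theorem \ref{app:lemma:symmetries}, the energy identity becomes
$$
\frac{1}{2}\frac{d}{dt}\|u\|_H^2 + a(\nu u, u) - (a(\nu,u), u) = (f, u).
$$
Applying Proposition \ref{prop:space-varying-coercivity} to the dissipation term then yields
$$
\frac{1}{2}\frac{d}{dt}\|u\|_H^2 + \underline{\nu}\|u\|_V^2 \leqs (a(\nu,u), u) + (f, u).
$$

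Next, I would bound the right-hand side by Cauchy--Schwarz and Young's inequality, pairing each factor against $\|u\|_V$ and balancing the $\epsilon$-weights so that a total of $\tfrac{\underline{\nu}}{2}\|u\|_V^2$ can be absorbed on the left. Treating $\nabla\nu\cdot\nabla u$ as an element dually paired against $u \in V$, this produces
$$
\tfrac{1}{2}\frac{d}{dt}\|u\|_H^2 + \tfrac{\underline{\nu}}{2}\|u\|_V^2 \leqs \frac{C}{\underline{\nu}}\bigl(\|f\|_{V'}^2 + \|\nabla\nu\cdot\nabla u\|_W^2\bigr),
$$
for a numerical constant $C$ fixed by the Young splitting. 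Applying the Poincar\'{e} inequality $\|u\|_H^2 \leqs c_\rho^2 \|u\|_V^2$ then converts this into a scalar linear differential inequality of the form
$$
\frac{d}{dt}\|u\|_H^2 + \frac{\underline{\nu}}{c_\rho^2}\|u\|_H^2 \leqs \frac{2C}{\underline{\nu}}\bigl(\|f\|_{V'}^2 + \|\nabla\nu\cdot\nabla u\|_W^2\bigr),
$$
to which the classical Gronwall inequality (Lemma \ref{app:lemma:gronwall-classical}) applies. Computing the convolution against the exponential kernel $e^{-\underline{\nu}(t-s)/c_\rho^2}$, noting $\int_0^\infty e^{-\underline{\nu} s/c_\rho^2}\,ds = c_\rho^2/\underline{\nu}$, and taking $\limsup_{t\to\infty}$ kills the transient initial-condition term and leaves (\ref{eqn:energy1-space}) (possibly up to the explicit numerical constant, depending on how the Young weights are distributed).

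The main obstacle I anticipate is the new coupling term $(a(\nu,u), u)$. To bound it cleanly against $\|u\|_V$, the ambient space $W$ (left unspecified in the statement, but described as the Banach space where $\nu$ lives so that $\nu A$ remains bounded) must in effect embed into $V'$; this in turn demands mild regularity of $\nu$ — for instance $\nu \in W^{1,\infty}(\Omega)$, which ensures $\nabla\nu\cdot\nabla u \in L^2(\Omega) \hookrightarrow V'$. A secondary technical subtlety is that Proposition \ref{prop:space-varying-coercivity} was established for $u \in V \cap H^2(\Omega)$, which is strictly stronger than the weak-solution regularity $u \in V$ assumed here; a Galerkin or density argument (or an additional regularity hypothesis on the weak solution) would be needed to rigorously justify applying the coercivity bound inside the energy identity at the level of weak solutions.
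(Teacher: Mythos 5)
Your proposal follows essentially the same route as the paper's own proof: test \eqref{eqn:weak-b} with $\eta = u$, invoke $b(u,u,u)=0$ and the coercivity bound of Proposition~\ref{prop:space-varying-coercivity}, split the right-hand side with Young's inequality so that the $|u|_V^2$ contribution is absorbed, apply the Poincar\'{e} inequality, and close with the classical Gronwall lemma and a $\limsup$, arriving at the constant $c_\rho^2/(2\underline{\nu}^2)$ from the exponential convolution. The two caveats you flag --- that $\nabla\nu\cdot\nabla u$ must pair boundedly against $u\in V$ (i.e.\ $W$ effectively embeds in $V'$), and that Proposition~\ref{prop:space-varying-coercivity} is stated only for $u\in V\cap H^2(\Omega)$ --- are genuine gaps that the paper's proof also leaves implicit, so raising them is a point in your favor rather than a deviation.
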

\begin{proof}
Beginning with \eqref{eqn:weak-b} for \(\eta = u\), using \eqref{eqn:dbydt}, noting that Theorem \ref{app:lemma:symmetries} guarantees \(b(u,u,u) = 0\), and under the assumption that Proposition~\ref{prop:space-varying-coercivity} holds, we can start with
$$
\frac{1}{2}
\frac{d}{dt}
\|u\|_H^2
+ \underline{\nu}
|u|_V^2
\leqs
\|f\|_{V'}
|u|_V
+ \|\nabla \nu \cdot \nabla u\|_W
|u|_V.
$$
Applying Young's Inequality leads to
\begin{align*}
\frac{d}{dt} \|u\|_H^2
+ 2 \underline{\nu} |u|_V^2
& \leqs \left( \sqrt{\underline{\nu}} \|f\|_{V'} \right)
\left( \frac{1}{\sqrt{\underline{\nu}}} |u|_V \right)
+ \left( \sqrt{\underline{\nu}} \|\nabla \nu \cdot \nabla u\|_{W} \right)
\left( \frac{1}{\sqrt{\underline{\nu}}} |u|_V \right)
\\
& \leqs \frac{\underline{\nu}}{2} \|f\|_{V'}^2
+ \frac{1}{2 \underline{\nu}} |u|_{V}^2
+ \frac{\underline{\nu}}{2} \|\nabla \nu \cdot \nabla u\|_W^2
+ \frac{1}{2 \underline{\nu}} |u|_V^2,
\end{align*}
which gives then 
\begin{equation}
\label{eqn:L2-space-equation}
\frac{d}{dt} \|u\|_H^2
+ \underline{\nu} |u|_V^2
\leqs
\frac{1}{2 \underline{\nu}}
\|f\|_{V'}^2
+ \frac{1}{2 \underline{\nu}}
\|\nabla \nu \cdot \nabla u\|_{W}^2.
\end{equation}
Employing the Poincar\'e inequality, we we end up with 
$$
\frac{d}{dt} \|u\|_H^2
+ \frac{\underline{\nu}}{c_\rho^2} \|u\|_H^2
\leqs \frac{1}{2 \underline{\nu}}
\|f\|_{V'}^2
+ \frac{1}{2 \underline{\nu}}
\|\nabla \nu \cdot \nabla u\|_{W}^2.
$$
This is a differential inequality for \(\|u(t)\|_H^2\), and by Gronwall's Inequality (Lemma \ref{app:lemma:gronwall-classical}) it holds that
\begin{align*}
\|u(t)\|_H^2
& \leqs \|u(r)\|_H^2 e^{-\int_r^t \underline{\nu}/c_\rho^2 d\tau}
+ \int_r^t \frac{1}{2 \underline{\nu}} \left( \|f(s)\|_{V'}^2 \| + \|\nabla \nu \cdot \nabla u(s) \|_W^2 \right)
e^{-\int_r^t  \underline{\nu} / c_\rho^2 d\tau} ds
\\
& = \|u(r)\|_H^2 e^{-\underline{\nu}(t-r)/c_\rho^2}
+ \int_r^t \frac{1}{2 \underline{\nu}} e^{-\underline{\nu}(t-s)/c_\rho^2} \left( \|f(s)\|_{V'}^2 \| + \|\nabla \nu \cdot \nabla u(s) \|_W^2 \right) ds
\\
& \leqs \|u(r)\|_H^2 e^{-\underline{\nu}(t-r)/c_\rho^2}
\\
& + \frac{1}{2 \underline{\nu}} \sup_{r \leqs \delta \leqs t}
\left( \|f(\delta)\|_{V'}^2 \| + \|\nabla \nu \cdot \nabla u(\delta) \|_W^2 \right) \int_r^t e^{\underline{\nu}(t-s)/c_\rho^2} ds
\\
& = \|u(r)\|_H^2 e^{-\underline{\nu}(t-r)/c_\rho^2} 
\\
& + \frac{1}{2 \underline{\nu}} \sup_{r \leqs \delta \leqs s}
\left( \|f(\delta)\|_{V'}^2 \| + \|\nabla \nu \cdot \nabla u(\delta) \|_W^2 \right) \frac{c_\rho}{\underline{\nu}^2} \left( e^0-e^{-\underline{\nu}(t-r)/c_\rho^2}\right),
\end{align*}
or more simply 
\begin{align*}
\|u(t)\|_H^2 \leqs \|u(r)\|_H^2
e^{\underline{\nu} (t-r)/c_\rho^2}
+ \frac{c_\rho^2}{2\underline{\nu}^2}
\sup_{r \leqs \delta \leqs t}
\left( \|f(\delta)\|_{V'}^2 \| + \|\nabla \nu \cdot \nabla u(\delta) \|_W^2 \right),
\end{align*}
which must hold for every \(r \in (0,t]\). Taking the \(\limsup_{t \rightarrow \infty}\) of both sides of the inequality leaves \eqref{eqn:energy1-space}
\end{proof}

The second estimate gives a bound on the time-averaged $H^1$-semi-norm 
of a weak solution to~\eqref{eqn:ns-wk-space}--\eqref{eqn:ns-wk-space-bc}.
\begin{proposition}[{{\bf Time-averaged $H^1$-Estimates, space-varying viscosity}}]
   \label{proposition:energy2-space}
Let $u \in L^2((0,T);V)$ be a weak solution of the Navier-Stokes 
equations~\eqref{eqn:ns-wk-space}--\eqref{eqn:ns-wk-space-bc}, 
with Lipschitz domain $\Omega \subset \mathbb{R}^d$, $d=2$ or $d=3$,
and assume that Proposition~\ref{prop:space-varying-coercivity} holds.
Then for every $T$ with $T \geqs c_\rho^2/\underline{\nu} > 0$ it holds that
\begin{equation}
\label{eqn:energy2-space}
\limsup_{t \rightarrow \infty} \frac{1}{T} \int_{t}^{t+T} 
   |u(\tau)|_{V}^2 d\tau 
   \leqs 
\frac{1}{\underline{\nu}^2}
\left(
  \limsup_{t \rightarrow \infty} 
   \|f(t)\|_{V'}^2 
      + \limsup_{t \rightarrow \infty}\|\nabla \nu \cdot \nabla u\|_{W}^2 \right),
\end{equation}
where $c_\rho$ is the constant from the Poincar\'{e} inequality 
(Lemma~\ref{app:lemma:poincare} in Appendix~\ref{app:technical}),
and \(\underline{\nu} = \inf_{\Omega} \nu > 0\).
\end{proposition}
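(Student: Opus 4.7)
The plan is to follow the same two-step strategy used in the time-varying case (Lemma~\ref{lemma:energy3-time}) and, essentially, in the constant-viscosity analog: first integrate the pointwise energy differential inequality over a window of length $T$, then take a $\limsup$ and absorb the initial value via the already-proven $L^2$ bound.

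Concretely, I would begin from the differential inequality~\eqref{eqn:L2-space-equation} that was derived inside the proof of Proposition~\ref{proposition:energy1-space}, namely
$$
\frac{d}{dt}\|u\|_H^2 + \underline{\nu}\,|u|_V^2
\leqs \frac{1}{2\underline{\nu}}\|f\|_{V'}^2
    + \frac{1}{2\underline{\nu}}\|\nabla\nu\cdot\nabla u\|_W^2.
$$
Integrating both sides on $[t,t+T]$ and dropping the nonnegative boundary term $\|u(t+T)\|_H^2$ on the left gives
$$
\underline{\nu}\int_t^{t+T}|u(\tau)|_V^2\,d\tau
\leqs \|u(t)\|_H^2
   + \frac{T}{2\underline{\nu}}
     \sup_{t\leqs s\leqs t+T}\bigl(\|f(s)\|_{V'}^2+\|\nabla\nu\cdot\nabla u(s)\|_W^2\bigr).
$$
Dividing by $\underline{\nu} T$ and taking $\limsup_{t\to\infty}$ yields
$$
\limsup_{t\to\infty}\frac{1}{T}\int_t^{t+T}|u(\tau)|_V^2\,d\tau
\leqs \frac{1}{\underline{\nu} T}\limsup_{t\to\infty}\|u(t)\|_H^2
    + \frac{1}{2\underline{\nu}^2}
      \Bigl(\limsup_{t\to\infty}\|f(t)\|_{V'}^2
           + \limsup_{t\to\infty}\|\nabla\nu\cdot\nabla u(t)\|_W^2\Bigr).
$$

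Next I would invoke the $L^2$ bound from Proposition~\ref{proposition:energy1-space} to control $\limsup \|u(t)\|_H^2$ by $(c_\rho^2/2\underline{\nu}^2)$ times the sum of the two $\limsup$ terms on the right. Because the hypothesis is $T\geqs c_\rho^2/\underline{\nu}$, the prefactor arising from this substitution satisfies
$$
\frac{1}{\underline{\nu} T}\cdot\frac{c_\rho^2}{2\underline{\nu}^2}
= \frac{c_\rho^2}{2\underline{\nu}^3 T}
\leqs \frac{1}{2\underline{\nu}^2},
$$
so combining it with the other $1/(2\underline{\nu}^2)$ contribution produces exactly the coefficient $1/\underline{\nu}^2$ demanded by~\eqref{eqn:energy2-space}.

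The only real subtlety is one of bookkeeping: one must be careful that the $\limsup$ of the supremum appearing after integration can be replaced by $\limsup_{t\to\infty}$ of the pointwise quantities (this is standard, assuming the forcing and the cross term $\|\nabla\nu\cdot\nabla u\|_W$ are measurable with finite $\limsup$, which is built into the hypothesis). The genuinely delicate ingredient was Proposition~\ref{prop:space-varying-coercivity}, which underwrites inequality~\eqref{eqn:L2-space-equation}; with that in hand, the present proposition is essentially a time-average of the already-established $L^2$ estimate and should require no new analytical ideas.
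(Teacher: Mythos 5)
Your proposal is correct and follows essentially the same route as the paper's own proof: integrate the differential inequality~\eqref{eqn:L2-space-equation} over $[t,t+T]$, drop the nonnegative term $\|u(t+T)\|_H^2$, take the $\limsup$, substitute the $L^2$ bound from Proposition~\ref{proposition:energy1-space}, and use $T \geqs c_\rho^2/\underline{\nu}$ to combine the two $\tfrac{1}{2\underline{\nu}^2}$ contributions into the stated coefficient $\tfrac{1}{\underline{\nu}^2}$. No substantive differences.
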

\begin{proof}
  We can begin with \eqref{eqn:L2-space-equation} from the proof of
  Proposition~\ref{proposition:energy1-space}, which was
  $$
  \frac{d}{dt} \|u\|_H^2
  + \underline{\nu} |u|_V^2
  \leqs \frac{1}{2 \underline{\nu}} \|f\|_{V'}^2
  + \frac{1}{2 \underline{\nu}} \|\nabla \nu \cdot \nabla u\|_{W}^2.
  $$
  Integrating from \(t\) to \(t+t\) with \(T \geqs 0\) gives 
  $$
  \|u(t+T)\|_H^2 - \|u(t)\|_H^2
  + \underline{\nu} \int_t^{t+T} |u(\tau)|_V^2 d\tau
  \leqs 
  \frac{1}{2 \underline{\nu}} \int_t^{t+T} \|f(\tau)\|_{V'}^2
  + \frac{1}{2 \underline{\nu}} \int_t^{t+T} \|\nabla \nu \cdot \nabla u\|_{W}^2
  $$
  Dropping the positive first term on the left and bounding the integral on the right gives
$$
 \int_{t}^{t+T} |u(\tau)|_V^2
 \leqs \frac{1}{\underline{\nu}} \|u(t)\|_H^2
 + \frac{T}{2 \underline{\nu}^2} \sup_{t \leqs s \leqs t+T} \|f(s)\|_{V'}^2
 + \frac{T}{2 \underline{\nu}^2} \sup_{t \leqs s \leqs t+T} \|\nabla \nu \cdot \nabla u\|_{W}^2.
$$
Taking the \(\limsup_{t \rightarrow \infty}\) of both sides, and dividing by \(T\), gives 
\begin{align*}
\limsup_{t \rightarrow \infty} \frac{1}{T} \int_t^{t+T} |u(\tau)|_V^2
& \leqs \frac{1}{\underline{\nu}T} \limsup_{t \rightarrow \infty} \|u(t)\|_H^2
+ \frac{1}{2 \underline{\nu}^2} \limsup_{t \rightarrow \infty} \|f(t)\|_{V'}^2
\\
& + \frac{1}{2 \underline{\nu}^2} \limsup_{t \rightarrow \infty} \|\nabla \nu \cdot \nabla u(t)\|_{W}^2
\end{align*}
Using the estimate from Proposition \ref{proposition:energy1-space} gives then
\begin{align*}
\limsup_{t \rightarrow \infty} \frac{1}{T} \int_{t}^{t+T} |u(\tau)|_V^2
\leqs & \left( \frac{1}{\underline{\nu}T} \cdot \frac{c_\rho^2}{2 \underline{\nu}^2} + \frac{1}{2 \underline{\nu}^2} \right)
\\
& \left( \limsup_{t \rightarrow \infty} \|f(t)\|_{V'}^2
+ \limsup_{t \rightarrow \infty} \|\nabla \nu \cdot \nabla u(t)\|_W^2 \right)
\end{align*}
Since \(T \geqs c_\rho^2 / \underline{\nu}\), we end up with \eqref{eqn:energy2-space}
\end{proof}
\begin{remark}
To use these estimates with the determining projection
framework of~\S\ref{sec:approx-theory} (Theorem~\ref{thm:main-2d}),
it remains to determine appropriate function spaces for
$\nu$ so that terms involving $\nu$ and $\nabla \nu$ are well-defined
and compabible with both the weak formulation, 
the theory for weak solutions $u$, 
and any estimates we established above for determining projections.
Although $\nu$ spatially varies, it is taken here to be given as data,
and one can reverse-engineer any assumptions needed for 
e.g. $\nabla \nu \cdot \nabla u$,
or other terms involving $\nu$, to be well-defined.
Allowing for a more complicated class of variable viscosity,
such as viscosity that varies with the velocity,
would greatly complicate this discussion.
\end{remark}

\appendix
\section{Some technical tools}
\label{app:technical}

Here is a collection of some standard technical tools
that we use in the paper.

Young's inequality is used repeatedly throughout.
\begin{lemma}[{{\bf Young's Inequality}}]
   \label{app:lemma:young}
For $a,b\geqs 0$, $1<p,q<\infty$, $1/p+1/q=1$, it holds that
\begin{equation}
   \label{app:eqn:young}
ab \leqs \frac{a^p}{p} + \frac{b^q}{q}.
\end{equation}
\end{lemma}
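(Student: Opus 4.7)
The plan is to derive Young's inequality from the strict concavity of the natural logarithm on $(0,\infty)$, together with the identity $\tfrac{1}{p} + \tfrac{1}{q} = 1$ which turns the weights into a convex combination. This route keeps the argument to a handful of lines and avoids any calculus.

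First, I would dispose of the boundary cases: if $a=0$ or $b=0$, then the left-hand side $ab$ of \eqref{app:eqn:young} equals $0$ while the right-hand side $a^p/p + b^q/q$ is nonnegative, so the inequality is immediate. I may therefore assume $a,b>0$ for the rest of the argument, which legitimizes taking logarithms.

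Next, I would exploit the concavity of $\ln$ on $(0,\infty)$ with weights $\lambda = 1/p$ and $1-\lambda = 1/q$ applied to the positive quantities $x = a^p$ and $y = b^q$. Concavity yields
$$
\ln\!\left(\tfrac{1}{p}\,a^p + \tfrac{1}{q}\,b^q\right)
\;\geqs\; \tfrac{1}{p}\ln(a^p) + \tfrac{1}{q}\ln(b^q)
\;=\; \ln a + \ln b \;=\; \ln(ab).
$$
Because $\ln$ is strictly increasing, exponentiating both sides delivers $ab \leqs a^p/p + b^q/q$, which is precisely \eqref{app:eqn:young}.

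There is essentially no serious obstacle here; the only subtlety is remembering to separate off the $ab=0$ case before invoking the logarithm. An alternative route through elementary calculus, namely fixing $b>0$ and minimizing $g(a) = a^p/p + b^q/q - ab$ over $a\geqs 0$, would produce the same bound (and incidentally pinpoint equality at $a^p = b^q$), but it requires slightly more bookkeeping, so I would prefer the concavity argument for its brevity.
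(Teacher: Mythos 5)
Your argument is correct. The reduction to $a,b>0$ is handled properly, and the concavity step is the standard one: with weights $1/p$ and $1/q$ summing to $1$, concavity of $\ln$ on $(0,\infty)$ gives $\ln\bigl(\tfrac{1}{p}a^p+\tfrac{1}{q}b^q\bigr)\geqs \tfrac{1}{p}\ln(a^p)+\tfrac{1}{q}\ln(b^q)=\ln(ab)$, and monotonicity of $\exp$ finishes the job. Note, however, that the paper does not actually prove this lemma; its ``proof'' is a pointer to the literature (the reference \cite{KoFo70}), so there is no in-paper argument to compare yours against. Your self-contained derivation is a perfectly standard and valid substitute; the only mild caveat is that ``avoids any calculus'' oversells it slightly, since the strict concavity of $\ln$ is itself usually certified by checking that its second derivative is negative. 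The alternative you mention (fixing $b$ and minimizing $a\mapsto a^p/p+b^q/q-ab$) is equally routine and has the small advantage of identifying the equality case $a^p=b^q$, but either route is fine for the purposes of this paper, which only ever invokes the inequality itself.
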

\begin{proof}
See for example~\cite{KoFo70}.
\end{proof}

We use the Poincar\'{e} Inequality in several places;
in our setting, it takes the following form for both the classical
Sobolev space $H^1_0(\Omega)$ and the space of
vector-valued functions $\mathbf{H}^1_0(\Omega)$.
\begin{lemma}[{{\bf Poincar\'{e} Inequality}}]
   \label{app:lemma:poincare}
If $\Omega$ is bounded, then it holds that
\begin{equation}
   \label{app:eqn:poincare}
\| u \|_{L^2(\Omega)} \leqs c_\rho(\Omega) | u |_{H^1(\Omega)}, 
     ~~\forall u \in H^1_0(\Omega).
\end{equation}
\end{lemma}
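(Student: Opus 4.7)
The plan is to prove the inequality on the dense subspace $C_0^\infty(\Omega)$ first and then extend by density to all of $H^1_0(\Omega)$, which is the most elementary route to an existence statement for the constant $c_\rho(\Omega)$. The sharper identification $c_\rho = 1/\sqrt{\lambda_1}$ with $\lambda_1$ the smallest eigenvalue of the Dirichlet Laplacian requires spectral theory and is not needed for the bare statement as written.

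First, since $\Omega$ is bounded, I fix constants $a < b$ so that $\Omega$ lies inside the slab $S = \{x \in \mathbb{R}^d : a < x_1 < b\}$. For any $u \in C_0^\infty(\Omega)$, I extend $u$ by zero to all of $\mathbb{R}^d$; the result is smooth (with support compactly contained in $\Omega$) and vanishes for $x_1 \leqs a$. The fundamental theorem of calculus then gives
$$
u(x_1, x') = \int_a^{x_1} \partial_1 u(t, x') \, dt,
$$
where $x' = (x_2, \ldots, x_d)$. Applying the Cauchy--Schwarz inequality in the $t$-variable yields the pointwise bound $|u(x_1, x')|^2 \leqs (b-a) \int_a^b |\partial_1 u(t, x')|^2 \, dt$, and integrating both sides over $\Omega$ (viewed as a subset of $S$) produces
$$
\|u\|_{L^2(\Omega)}^2 \leqs (b-a)^2 \, \|\partial_1 u\|_{L^2(\Omega)}^2 \leqs (b-a)^2 \, |u|_{H^1(\Omega)}^2.
$$
This establishes the desired inequality on $C_0^\infty(\Omega)$ with explicit constant $c_\rho(\Omega) \leqs b - a$.

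Finally, I pass to $H^1_0(\Omega)$ by density. By the very definition of $H^1_0(\Omega)$ in the preliminaries, every $u \in H^1_0(\Omega)$ is the limit in the $H^1$-norm of a sequence $\{u_n\} \subset C_0^\infty(\Omega)$. Both sides of the inequality are continuous with respect to the $H^1$-norm, so passing to the limit gives the bound for all $u \in H^1_0(\Omega)$. There is no substantial obstacle in this argument; the only point requiring care is the zero-extension step, which is legitimate precisely because elements of $C_0^\infty(\Omega)$ have compact support strictly inside $\Omega$. The vector-valued case for $\mathbf{H}^1_0(\Omega)$ follows by applying the scalar inequality component-wise and summing.
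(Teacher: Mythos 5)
Your proof is correct. Note that the paper does not actually prove this lemma at all---its ``proof'' is a one-line citation to Ne\v{c}as---so any self-contained argument is necessarily a different route. What you supply is the standard elementary slab argument: enclose $\Omega$ in $\{a < x_1 < b\}$, write $u(x_1,x') = \int_a^{x_1}\partial_1 u(t,x')\,dt$ for $u \in C_0^\infty(\Omega)$ extended by zero, apply Cauchy--Schwarz to get $\|u\|_{L^2(\Omega)} \leqs (b-a)\|\partial_1 u\|_{L^2(\Omega)} \leqs (b-a)|u|_{H^1(\Omega)}$, and close by density using the very definition of $H^1_0(\Omega)$ given in \S\ref{sec:prelim}. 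Each step is sound, and your two points of care---that the zero extension is legitimate only because of compact support inside $\Omega$, and that the vector-valued case follows componentwise---are exactly the right ones. What this buys is an explicit, self-contained constant $c_\rho(\Omega) \leqs b-a$ at the cost of sharpness: elsewhere the paper identifies $c_\rho = 1/\sqrt{\lambda_1}$ with $\lambda_1$ the smallest Stokes (or Dirichlet Laplacian) eigenvalue, which your argument does not produce, but you correctly observe that the lemma as stated only asserts existence of \emph{some} constant, for which your bound suffices; the quantitative estimates in Theorem~\ref{thm:main-2d} and the appendices would simply inherit the larger constant.
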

\begin{proof}
For example see~\cite{Neca67}.
\end{proof}

In this paper, we use the notation \(H\) and \(V\) for \(L^2(\Omega)\) and \(H^1(\Omega)\), respectively. 
\\
The following \emph{a priori} bounds can be derived for the 
trilinear form $b(\cdot,\cdot,\cdot)$.
\begin{lemma}[{{\bf Trilinear Form Bounds}}]
  \label{app:lemma:lady}
If $\Omega \subset \mathbb{R}^d$, then the trilinear form 
$b(u,v,w)$ is bounded on $V \times V \times V$ as follows,
where $d=2$ or $d=3$ is the spatial dimension:
\begin{eqnarray}
d=2: & 
|b(u,v,w)| \leqs 2^{1/2} \|u\|_{L^2(\Omega)}^{1/2} |u|_{H^1(\Omega)}^{1/2} 
                      |v|_{H^1(\Omega)} 
                     \|w\|_{L^2(\Omega)}^{1/2} |w|_{H^1(\Omega)}^{1/2},
   \label{app:eqn:lady_2d} \\
d=3: &
|b(u,v,w)| \leqs 2 \|u\|_{L^2(\Omega)}^{1/4} |u|_{H^1(\Omega)}^{3/4} 
                      |v|_{H^1(\Omega)} 
                     \|w\|_{L^2(\Omega)}^{1/4} |w|_{H^1(\Omega)}^{3/4}.
   \label{app:lemma:lady_3d}
\end{eqnarray}
Moreover, from H\"older inequalities we have for $d=2$ or $d=3$:
\begin{equation}
   \label{app:eqn:l_infty}
|b(v,u,v)| \leqs \|\nabla u\|_{L^{\infty}(\Omega)} \|v\|_{L^2(\Omega)}^2.
\end{equation}
\end{lemma}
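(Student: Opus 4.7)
The plan is to derive all three bounds from the defining integral identity $b(u,v,w) = \int_\Omega (u \cdot \nabla) v \cdot w$, combined with H\"older's inequality and the Ladyzhenskaya interpolation inequalities in the appropriate dimension. First I would observe
\[
|b(u,v,w)| \leqs \int_\Omega |u|\,|\nabla v|\,|w|,
\]
and apply H\"older's inequality with exponents $(4,2,4)$, yielding
\[
|b(u,v,w)| \leqs \|u\|_{L^4(\Omega)}\, |v|_{H^1(\Omega)}\, \|w\|_{L^4(\Omega)}.
\]

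Next, to handle the two $L^4$ factors, I would invoke the classical Ladyzhenskaya interpolation inequalities for $H^1_0(\Omega)$: in dimension two,
\[
\|\phi\|_{L^4(\Omega)} \leqs 2^{1/4}\, \|\phi\|_{L^2(\Omega)}^{1/2}\, |\phi|_{H^1(\Omega)}^{1/2},
\]
and in dimension three,
\[
\|\phi\|_{L^4(\Omega)} \leqs 2^{1/2}\, \|\phi\|_{L^2(\Omega)}^{1/4}\, |\phi|_{H^1(\Omega)}^{3/4}.
\]
Substituting each of these into the H\"older bound above, applied separately to $u$ and to $w$, produces inequalities \eqref{app:eqn:lady_2d} and \eqref{app:lemma:lady_3d}; the explicit constants $2^{1/2}$ and $2$ emerge as the product of the two Ladyzhenskaya constants in each dimension. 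These Ladyzhenskaya inequalities are classical and I would simply quote them from standard sources such as \cite{Tema77,Lion69} rather than reprove them.

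For the last bound \eqref{app:eqn:l_infty}, I would use a different H\"older splitting: since the middle factor involves $\nabla u$, pulling $|\nabla u|$ out in $L^\infty$ rather than $L^2$ gives
\[
|b(v,u,v)| \leqs \int_\Omega |v|\,|\nabla u|\,|v|
            \leqs \|\nabla u\|_{L^\infty(\Omega)}\, \|v\|_{L^2(\Omega)}^2,
\]
with no need for any Sobolev embedding. This case is essentially a one-line argument.

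The real technical content, and hence the main potential obstacle if one had to prove the lemma from scratch, is the Ladyzhenskaya interpolation inequality itself, which rests on Gagliardo--Nirenberg or, in $d=2$, on a direct argument applying the one-dimensional Sobolev embedding on horizontal and vertical slices. Given that this ingredient is quoted, the proof of the present lemma reduces to routine bookkeeping: checking the H\"older exponents and confirming that the constants combine correctly to yield the sharp factors $2^{1/2}$ and $2$ stated in \eqref{app:eqn:lady_2d} and \eqref{app:lemma:lady_3d}.
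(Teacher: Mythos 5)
Your argument is correct and is precisely the standard one: the paper offers no proof of this lemma beyond a citation to~\cite{Lady69,Tema77,Tema83,CoFo88}, and the H\"older $(4,2,4)$ splitting followed by the Ladyzhenskaya interpolation inequalities (with constants $2^{1/4}$ in $d=2$ and $2^{1/2}$ in $d=3$, whose squares give the stated factors $2^{1/2}$ and $2$) is exactly what those references do. The $L^\infty$ bound \eqref{app:eqn:l_infty} is likewise the immediate H\"older estimate you describe, so there is nothing to add.
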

\begin{proof}
See~\cite{Lady69,Tema77,Tema83,CoFo88}.
\end{proof}

The following useful symmetries can be shown for the trilinear form.
\begin{lemma}[{{\bf Trilinear Form Symmetries}}]
  \label{app:lemma:symmetries}
If $\Omega \subset \mathbb{R}^d$, then the trilinear form 
$b(u,v,w)$ on $V \times V \times V$ has the following symmetries:
\begin{align*}
b(u,v,v) &= 0,
\\
b(u,v,w) &= -b(u,w,v),
\\
b(u-v,u,u-v) &= b(u,u,u-v) - b(v,v,u-v).
\end{align*}
\end{lemma}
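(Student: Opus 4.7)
The plan is to establish the three identities in the order they appear, using the first to derive the second and then both to derive the third. Working from the definition $b(u,v,w) = (P((u\cdot\nabla)v), w)_H$, I first observe that since $w \in V$ is divergence-free and $P$ is the $L^2$-orthogonal projection onto $H$, self-adjointness of $P$ gives $b(u,v,w) = \int_\Omega (u\cdot\nabla)v \cdot w\,dx$, which removes $P$ from all subsequent computations.

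For the first identity $b(u,v,v)=0$, the key observation is the componentwise identity $((u\cdot\nabla)v)\cdot v = \tfrac{1}{2}(u\cdot\nabla)|v|^2$. Integration by parts then yields
\[
b(u,v,v) = -\tfrac{1}{2}\int_\Omega (\nabla\cdot u)\,|v|^2\,dx + \tfrac{1}{2}\int_{\partial\Omega}(u\cdot n)\,|v|^2\,dS.
\]
Both terms vanish: the volume integral because $u \in V$ satisfies $\nabla\cdot u = 0$, and the boundary integral because the trace of $u \in V \subset \mathbf{H}^1_0(\Omega)$ is zero on $\partial\Omega$. To make this rigorous on all of $V$, I would perform the computation first for $u,v \in \mathcal{V}$, where all manipulations are legal in the classical sense, and then extend to $V$ by density, using the continuity of $b$ on $V\times V\times V$ supplied by Lemma~\ref{app:lemma:lady}.

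The antisymmetry $b(u,v,w) = -b(u,w,v)$ then follows immediately by applying the first identity to the sum $v+w$: expanding
\[
0 = b(u,v+w,v+w) = b(u,v,v) + b(u,v,w) + b(u,w,v) + b(u,w,w)
\]
via bilinearity in the second and third slots and using the first identity to kill the two diagonal terms leaves exactly $b(u,v,w) + b(u,w,v) = 0$.

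For the third identity, I would set $z = u-v$ and expand using linearity in the first slot:
\[
b(u-v,u,u-v) = b(u,u,u-v) - b(v,u,u-v).
\]
It then suffices to show $b(v,u,u-v) = b(v,v,u-v)$, which after rearrangement is exactly $b(v,u-v,u-v)=0$, again an instance of the first identity. The main obstacle is not mathematical depth but careful bookkeeping of the density/extension argument justifying integration by parts on all of $V$; once the first identity is cleanly established, the remaining two identities are purely algebraic consequences.
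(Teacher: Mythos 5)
Your proof is correct: the reduction $b(u,v,v)=\tfrac12\int_\Omega (u\cdot\nabla)|v|^2\,dx=0$ via integration by parts on $\mathcal{V}$ followed by density (using the continuity of $b$ from Lemma~\ref{app:lemma:lady}), the polarization step for the antisymmetry, and the bilinear expansion for the third identity are exactly the standard argument. The paper itself offers no proof, deferring instead to the cited references (\cite{Tema77,Tema83,CoFo88}), and those references prove the result in essentially the same way you do.
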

\begin{proof}
See~\cite{Tema77,Tema83,CoFo88}.
\end{proof}

The classical Gronwall inequality is as follows.
\begin{lemma}[{{\bf Gronwall Inequality}}]
   \label{app:lemma:gronwall-classical}
If $\alpha(t)$ and $\beta(t)$ are real-valued and non-negative on 
$(0,\infty)$, and if the function $y(t)$ satisfies the following 
differential inequality:
$$
y^{\prime}(t) + \alpha(t) y(t) \leqs \beta(t), ~~\text{a.e.~on}~ (0,\infty),
$$
then $y(t)$ is bounded on $(0,\infty)$ by
\begin{align*}
y(t) & \leqs y(0) e^{-\int_0^t \alpha(\tau) d\tau}
       + \int_0^t \beta(s) e^{-\int_0^t \alpha(\tau) d\tau} ds.
\end{align*}
\end{lemma}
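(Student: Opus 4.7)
The plan is to prove this classical statement by the standard integrating factor technique, which converts the differential inequality into a form whose left-hand side is an exact derivative.

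First, I would introduce the integrating factor $\mu(t) = \exp\bigl(\int_0^t \alpha(\tau)\,d\tau\bigr)$. Since $\alpha$ is non-negative and (implicitly) locally integrable on $(0,\infty)$, $\mu$ is well-defined, absolutely continuous, and strictly positive, with $\mu'(t) = \alpha(t)\mu(t)$ almost everywhere. Multiplying the hypothesized inequality $y'(t) + \alpha(t)y(t) \leqs \beta(t)$ through by $\mu(t) > 0$ preserves the inequality and yields
\[
\mu(t)\,y'(t) + \mu(t)\alpha(t)\,y(t) \;\leqs\; \mu(t)\beta(t), \qquad \text{a.e. on } (0,\infty).
\]
The left-hand side is exactly $\tfrac{d}{dt}[\mu(t)y(t)]$, provided $y$ is regular enough for the product rule to apply pointwise a.e.

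Second, I would integrate this inequality from $0$ to $t$. Appealing to the fundamental theorem of calculus in the absolutely continuous setting, this gives
\[
\mu(t)y(t) - \mu(0)y(0) \;\leqs\; \int_0^t \mu(s)\beta(s)\,ds,
\]
and since $\mu(0) = 1$, dividing by $\mu(t)$ yields
\[
y(t) \;\leqs\; y(0)\,e^{-\int_0^t \alpha(\tau)\,d\tau} + \int_0^t \beta(s)\,e^{-\int_s^t \alpha(\tau)\,d\tau}\,ds,
\]
which is the desired bound (with the standard reading of the exponent in the integrand as $\int_s^t$).

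The only point that requires any care, and which I would flag as the main technical obstacle, is the regularity of $y$: the product rule identity $\tfrac{d}{dt}[\mu y] = \mu y' + \mu\alpha y$ and the fundamental theorem of calculus step both require $y$ to be absolutely continuous (or, equivalently, $y \in W^{1,1}_{\mathrm{loc}}$), so that its almost-everywhere derivative actually recovers $y$ by integration. In the applications in this paper, $y(t)$ is always a squared Sobolev norm of a weak solution, for which exactly this regularity is furnished by the standard distributional identity in \eqref{eqn:dbydt} together with the integrability of the right-hand side of the energy inequality. Under this standing regularity assumption on $y$, the integrating-factor argument above is rigorous and the stated bound follows immediately.
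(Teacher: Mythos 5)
The paper does not actually prove this lemma---it simply cites \cite{KoFo70}---so there is no in-paper argument to compare against; your integrating-factor proof is the standard one and it is correct. Two of your side remarks are worth keeping: first, the exponent in the integrand of the stated bound is indeed a typo and must be read as $e^{-\int_s^t \alpha(\tau)\,d\tau}$ (as written, with $\int_0^t$ and $\alpha \geqs 0$, the right-hand side would be \emph{smaller} than the true Gronwall bound and the claim would be false in general; the paper's own later applications, e.g.\ in Lemma~\ref{lemma:energy1-time}, use the $\int_s^t$ form). Second, your flag on the regularity of $y$ is the right one: the argument needs $y$ absolutely continuous so that $\frac{d}{dt}[\mu y] = \mu y' + \mu\alpha y$ a.e.\ and the fundamental theorem of calculus applies, and this is supplied in the paper's applications by the identity \eqref{eqn:dbydt} for $y = \|u\|_H^2$. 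With those two points made explicit, the proof is complete and rigorous.
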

\begin{proof}
See for example~\cite{KoFo70}.
\end{proof}

The following generalized Gronwall inequality is used repeatedly
throughout Sections~\ref{sec:time-varying-estimates}
and~\ref{sec:space-varying} to obtain
\emph{a priori} estimates.
\begin{lemma}[{{\bf Generalized Gronwall Lemma}}]
   \label{app:lemma:gronwall-generalized}
Let $T>0$ be fixed, and let $\alpha(t)$ and $\beta(t)$ be locally integrable 
and real-valued on $(0,\infty)$, satisfying:
$$
\liminf_{t \rightarrow \infty}
    \frac{1}{T} \int_{t}^{t+T} \alpha(\tau) d\tau = m>0,
\ \ \ \ \ 
\limsup_{t \rightarrow \infty}
    \frac{1}{T} \int_{t}^{t+T} \alpha^-(\tau) d\tau = M<\infty,
$$
$$
\lim_{t \rightarrow \infty}
     \frac{1}{T} \int_{t}^{t+T} \beta^+(\tau) d\tau = 0,
$$
where $\alpha^-=\max\{-\alpha,0\}$ and $\beta^+=\max\{\beta,0\}$.
If $y(t)$ is an absolutely continuous non-negative function on
$(0,\infty)$, and $y(t)$ satisfies the following differential inequality:
$$
y'(t) + \alpha(t) y(t) \leqs \beta(t), \ \ \text{a.e.~on}~ (0,\infty),
$$
then $\lim_{t \rightarrow \infty} y(t) = 0$.
\end{lemma}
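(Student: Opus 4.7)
The plan is to combine an integrating-factor representation coming from the classical Gronwall Lemma with the three time-averaging hypotheses on $\alpha$, $\alpha^-$, and $\beta^+$. The natural first step is to set $\phi(t,s) := \int_s^t \alpha(\tau)\,d\tau$ and apply Lemma~\ref{app:lemma:gronwall-classical} to the inequality $y' + \alpha y \leqs \beta \leqs \beta^+$ on $[t_0,t]$ for arbitrary $t_0 > 0$. This yields the representation
$$
y(t) \leqs y(t_0)\,e^{-\phi(t,t_0)} + \int_{t_0}^{t} \beta^+(s)\,e^{-\phi(t,s)}\,ds,
$$
and showing $y(t) \rightarrow 0$ reduces to showing that each of these two terms vanishes as $t \rightarrow \infty$.

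For the homogeneous term, the key is to verify that $\phi(t,t_0) \rightarrow \infty$, with enough quantitative control to guarantee exponential decay. Using the $\liminf$ hypothesis, for any $\epsilon \in (0,m)$ there is a threshold past which the $T$-averages of $\alpha$ exceed $m-\epsilon$; partitioning $[t_0,t]$ into consecutive length-$T$ windows then shows that $\phi(t,t_0)$ grows at least linearly at rate $m - \epsilon$, so $y(t_0)\,e^{-\phi(t,t_0)} \rightarrow 0$. The $\limsup$ assumption on $\alpha^-$ enters here chiefly to ensure that this linear growth is not contaminated by pathological negative excursions of $\alpha$ at the interior of a window, which matters more when working with the kernel $e^{-\phi(t,s)}$ for $s$ not at a window endpoint.

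The convolution term $\int_{t_0}^t \beta^+(s)\,e^{-\phi(t,s)}\,ds$ is where I expect the main obstacle to lie, because both $\beta^+$ and the kernel are controlled only through $T$-averages rather than pointwise bounds. The plan is an $\varepsilon$-argument: fix $\varepsilon > 0$, use the hypothesis on $\beta^+$ to choose $t_1 \geqs t_0$ past which $\int_s^{s+T} \beta^+(\tau)\,d\tau \leqs \varepsilon T$, and split the integral at $t_1$. The early piece $\int_{t_0}^{t_1}$ is bounded by $\|\beta^+\|_{L^1([t_0,t_1])}\,e^{-\phi(t,t_1)}$ and vanishes by the growth estimate for $\phi$ established in the previous step. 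For the late piece, I would partition $[t_1,t]$ into length-$T$ subintervals $I_k$ and, on each $I_k$, dominate the kernel by its value at the right endpoint of $I_k$ times the correction factor $\exp(\int_{I_k} \alpha^-\,d\tau)$, which is uniformly bounded thanks to the $\limsup$ hypothesis on $\alpha^-$. Summing the resulting geometric series in $k$ produces an upper bound of the form (constant) $\cdot\, \varepsilon$, and letting $\varepsilon \rightarrow 0$ then closes the argument.
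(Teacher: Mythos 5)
The paper does not actually prove this lemma; it simply cites~\cite{FMTT83,JoTi91}, so there is no in-paper argument to compare against. Your outline is essentially the standard proof from those references, and it is correct: the integrating-factor representation, the linear growth of $\phi(t,t_0)$ deduced from the $\liminf$ hypothesis with the $\alpha^-$ bound controlling the partial window, and the window-by-window geometric-series estimate of the convolution term are exactly the right ingredients, and the roles you assign to each hypothesis are the right ones. Two small points to tighten when writing it out. First, Lemma~\ref{app:lemma:gronwall-classical} as stated in the appendix assumes $\alpha\geqs 0$, so for signed $\alpha$ you should rederive the representation directly from $\frac{d}{dt}\bigl(y(t)e^{\phi(t,t_0)}\bigr)\leqs\beta(t)e^{\phi(t,t_0)}$ a.e., which is legitimate because $e^{\phi(\cdot,t_0)}$ is absolutely continuous on compact intervals when $\alpha$ is locally integrable. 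Second, your bound on the early piece omits the factor $\sup_{s\in[t_0,t_1]}e^{-\phi(t_1,s)}$, which can exceed $1$; it is finite (again by local integrability of $\alpha$) and independent of $t$, so the conclusion that this piece decays like $e^{-\phi(t,t_1)}$ still stands, but the factor should appear in the estimate.
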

\begin{proof}
See~\cite{FMTT83,JoTi91}.
\end{proof}

\section{A priori estimates for constant viscosity}
\label{app:apriori-constant-viscosity}

Variations of the following two \emph{a priori} bounds on solutions
to the NSE can be found throughout the literature on the
Navier-Stokes equations; cf.~\cite{Tema77,Tema83,CoFo88}.
For example, Lemmas~\ref{app:lemma:energy1} and~\ref{app:lemma:energy2}
below (both from~\cite{Hols95b}) are simple generalizations to
$f \in V'$ of the bounds in e.g.~\cite{CoFo88}, 
presented there for $f \in H$.

The first estimate gives a bound on the $L^2$-norm of a weak
solution to~\eqref{eqn:nv}--\eqref{eqn:nv_bc}.
\begin{lemma}[{{\bf $L^2$-Estimates, constant viscosity}}]
   \label{app:lemma:energy1}
Let $u \in L^2((0,T);V)$ be a weak solution of the Navier-Stokes 
equations~\eqref{eqn:nv}--\eqref{eqn:nv_bc}, 
with Lipschitz domain $\Omega \subset \mathbb{R}^d$, $d=2$ or $d=3$.
It holds that
\begin{align}
   \label{eqn:energy1}
\limsup_{t\rightarrow \infty} \|u(t)\|_H^2
& \leqs  \frac{c_{\rho}^2}{\nu^2} \limsup_{t\rightarrow \infty} 
     \|f(t)\|_{V'}^2,
\end{align}
where $c_{\rho}$ is the constant from the Poincare inequality.
\end{lemma}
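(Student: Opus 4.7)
The plan is to follow the same template that is used in the proof of Lemma~\ref{lemma:energy1-time} for the time-varying viscosity case, which reduces here to the more classical constant-viscosity argument found in~\cite{Tema77,Tema83,CoFo88}. First, I would start from the weak formulation \eqref{eqn:weak} and take the test function $v = u$. Using the identity \eqref{eqn:dbydt} to rewrite the time-derivative pairing, together with the cancellation $b(u,u,u) = 0$ guaranteed by Lemma~\ref{app:lemma:symmetries}, I would arrive at
$$
\tfrac{1}{2} \tfrac{d}{dt} \|u\|_H^2 + \nu \|u\|_V^2 = \langle f, u \rangle \leqs \|f\|_{V'} \|u\|_V.
$$

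Next, I would split the right-hand side using Young's inequality with exponents $p=q=2$, balancing the weights so that a $(\nu/2)\|u\|_V^2$ term can be absorbed into the left:
$$
\|f\|_{V'} \|u\|_V \leqs \tfrac{1}{2\nu} \|f\|_{V'}^2 + \tfrac{\nu}{2} \|u\|_V^2.
$$
After absorption and multiplying through by $2$, this yields the core dissipation inequality
$$
\tfrac{d}{dt} \|u\|_H^2 + \nu \|u\|_V^2 \leqs \tfrac{1}{\nu} \|f\|_{V'}^2.
$$
Then I would invoke the Poincar\'e inequality (Lemma~\ref{app:lemma:poincare}) in the form $\|u\|_H^2 \leqs c_\rho^2 \|u\|_V^2$ to replace $\nu \|u\|_V^2$ by $(\nu/c_\rho^2) \|u\|_H^2$, producing the scalar differential inequality
$$
\tfrac{d}{dt} \|u\|_H^2 + \tfrac{\nu}{c_\rho^2} \|u\|_H^2 \leqs \tfrac{1}{\nu} \|f\|_{V'}^2.
$$

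Finally, I would apply the classical Gronwall inequality (Lemma~\ref{app:lemma:gronwall-classical}) on an interval $[r,t]$. Since $\alpha \equiv \nu/c_\rho^2$ is constant, the integrating factor reduces to $e^{-(\nu/c_\rho^2)(t-r)}$, and bounding the forcing integral by its supremum on $[r,t]$ yields
$$
\|u(t)\|_H^2 \leqs \|u(r)\|_H^2 e^{-\tfrac{\nu}{c_\rho^2}(t-r)} + \tfrac{c_\rho^2}{\nu^2} \sup_{r \leqs s \leqs t} \|f(s)\|_{V'}^2,
$$
using $\int_r^t e^{-(\nu/c_\rho^2)(t-s)} ds \leqs c_\rho^2/\nu$. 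Taking $\limsup_{t \to \infty}$, the first term vanishes since $u(r) \in H$ and the exponential decays, while the supremum in the second term passes to $\limsup_{t \to \infty} \|f(t)\|_{V'}^2$, delivering \eqref{eqn:energy1}.

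There is no substantive obstacle here; this is a textbook energy estimate and the only care is in choosing Young's inequality weights so that the viscous term survives, and in verifying that the exponential-decay integral contributes exactly the $c_\rho^2/\nu$ factor that combines with the $1/\nu$ in front of $\|f\|_{V'}^2$ to produce the stated $c_\rho^2/\nu^2$. The argument is in fact a direct specialization of the time-varying proof of Lemma~\ref{lemma:energy1-time} when one sets $\nu(t) \equiv \nu$, which also matches the observation in the remark following Lemma~\ref{lemma:energy3-time}.
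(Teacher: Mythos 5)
Your proposal is correct and follows essentially the same route as the paper's proof: test with $u$, cancel the trilinear term, apply Young's inequality to obtain $\frac{d}{dt}\|u\|_H^2 + \nu\|u\|_V^2 \leqs \frac{1}{\nu}\|f\|_{V'}^2$, invoke Poincar\'e, and close with the classical Gronwall inequality and a $\limsup$. The only difference is cosmetic bookkeeping in the Young's inequality step (you absorb $\tfrac{\nu}{2}\|u\|_V^2$ and then multiply by $2$, while the paper multiplies by $2$ first), and both yield the identical differential inequality and final constant $c_\rho^2/\nu^2$.
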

\begin{proof}
Beginning with equation \eqref{eqn:weak} for $v=u$, 
using~\eqref{eqn:dbydt}, and noting 
that Theorem~\ref{app:lemma:symmetries} in Appendix~\ref{app:technical}
ensures $b(u,u,u)=0$, we are left with
\begin{align*}
\frac{1}{2} \frac{d}{dt}\|u\|_H^2
   + \nu \| u \|_V^2
& \leqs \|f\|_{V'} \|u\|_V.
\end{align*}
Applying Young's inequality leads to
\begin{align*}
\frac{d}{dt}\|u\|_H^2
   + 2 \nu \| u \|_V^2 
 & \leqs \left( \sqrt{\frac{2}{\nu}} \|f\|_{V'} \right)
         \left( \sqrt{2 \nu} \|u\|_{V} \right)
  \leqs \frac{1}{\nu} \|f\|_{V'}^2
    + \nu \|u\|_{V}^2,
\end{align*}
which gives then
\begin{align}
\frac{d}{dt}\|u\|_H^2
  + \nu \|u\|_{V}^2 
& \leqs \frac{1}{\nu} \|f\|_{V'}^2.
   \label{eqn:weak-start1}
\end{align}
Employing the Poincar\'{e} inequality we end up with
\begin{align*}
\frac{d}{dt}\|u\|_H^2
  + \frac{\nu}{c_{\rho}^2} \|u\|_H^2 
& \leqs \frac{1}{\nu} \|f\|_{V'}^2.
\end{align*}
This is a differential inequality for $\|u(t)\|_H^2$,
and by Gronwall's Inequality (Lemma~\ref{app:lemma:gronwall-classical}) 
it holds that
\begin{align*}
\|u(t)\|_H^2 
& \leqs 
   \|u(r)\|_H^2 e^{-\int_r^t \nu/c_{\rho}^2 d\tau}
    + \int_r^t \frac{1}{\nu} \|f(s)\|_{V'}^2
        e^{-\int_{s}^t \nu/c_{\rho}^2 d\tau} ds
\\
&   =  \|u(r)\|_H^2 e^{-\nu(t-r)/c_{\rho}^2}
    + \int_r^t \frac{1}{\nu} e^{-\nu(t-s)/c_{\rho}^2}
        \|f(s)\|_{V'}^2 ds
\\
&   \leqs  \|u(r)\|_H^2 e^{-\nu(t-r)/c_{\rho}^2}
    + \frac{1}{\nu} \sup_{r\leqs \delta \leqs t} \|f(\delta)\|_{V'}^2 
\int_r^t e^{-\nu(t-s)/c_{\rho}^2} ds
\\
&    =  \|u(r)\|_H^2 e^{-\nu(t-r)/c_{\rho}^2}
    + \frac{1}{\nu} \sup_{r\leqs \delta \leqs t} \|f(\delta)\|_{V'}^2 
   \frac{c_{\rho}^2}{\nu} 
   \left( e^0 - e^{-\nu(t-r)/c_{\rho}^2} \right),
\end{align*}
or more simply
$$
\|u(t)\|_H^2
     \leqs  \|u(r)\|_H^2 e^{-\nu(t-r)/c_{\rho}^2}
+ \frac{c_{\rho}^2}{\nu^2} \sup_{r\leqs \delta \leqs t} \|f(\delta)\|_{V'}^2,
$$
which must hold for every $r \in (0,t]$.
Taking the $\limsup_{t\rightarrow \infty}$ of both
sides of the inequality leaves~\eqref{eqn:energy1}.
\end{proof}

The second estimate gives a bound on the time-averaged $H^1$-semi-norm 
of a weak solution to~\eqref{eqn:nv}--\eqref{eqn:nv_bc}.
\begin{lemma}[{{\bf Time-averaged $H^1$-Estimates, constant viscosity}}]
   \label{app:lemma:energy2}
Let $u \in L^2((0,T);V)$ be a weak solution of the Navier-Stokes 
equations~\eqref{eqn:nv}--\eqref{eqn:nv_bc}, 
with Lipschitz domain $\Omega \subset \mathbb{R}^d$, $d=2$ or $d=3$.
Then for every $T$ with 
$T \geqs c_{\rho}^2/\nu > 0$ 
it holds that
\begin{align}
   \label{eqn:energy2}
\limsup_{t \rightarrow \infty} \frac{1}{T} \int_{t}^{t+T} 
   \|u(\tau)\|_V^2 d\tau 
& \leqs \frac{2}{\nu^2}
      \limsup_{t \rightarrow \infty} \|f(t)\|_{V'}^2,
\end{align}
where $c_{\rho}$ is the constant from the Poincare inequality.
\end{lemma}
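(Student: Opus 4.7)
The plan is to mirror the strategy used in the proof of Lemma~\ref{lemma:energy3-time} for time-varying viscosity, specialized to the constant case. The starting point is the differential inequality~\eqref{eqn:weak-start1} that was already derived in the proof of Lemma~\ref{app:lemma:energy1}, namely
$$
\frac{d}{dt}\|u\|_H^2 + \nu \|u\|_V^2 \leqs \frac{1}{\nu}\|f\|_{V'}^2.
$$
Thus the $L^2$ estimate from Lemma~\ref{app:lemma:energy1} and this intermediate inequality are the only two ingredients needed; no fresh variational argument is required.

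First I would integrate this differential inequality over the interval $[t,t+T]$, obtaining
$$
\|u(t+T)\|_H^2 - \|u(t)\|_H^2 + \nu \int_t^{t+T} \|u(\tau)\|_V^2\,d\tau \leqs \frac{1}{\nu}\int_t^{t+T} \|f(\tau)\|_{V'}^2\,d\tau.
$$
Then I would drop the nonnegative term $\|u(t+T)\|_H^2$ on the left, bound the right-hand integral by $T \sup_{t\leqs s\leqs t+T}\|f(s)\|_{V'}^2$, and divide through by $\nu T$ to isolate the time average of $\|u\|_V^2$.

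Next I would take $\limsup_{t\to\infty}$ of both sides. The term $\tfrac{1}{\nu T}\|u(t)\|_H^2$ is controlled via Lemma~\ref{app:lemma:energy1}, which gives $\limsup_{t\to\infty}\|u(t)\|_H^2 \leqs (c_\rho^2/\nu^2)\limsup_{t\to\infty}\|f(t)\|_{V'}^2$, so the first term contributes $c_\rho^2/(\nu^3 T)$ times $\limsup \|f\|_{V'}^2$, while the second contributes $1/\nu^2$ times the same quantity. Combining,
$$
\limsup_{t\to\infty}\frac{1}{T}\int_t^{t+T}\|u(\tau)\|_V^2\,d\tau \leqs \left(\frac{c_\rho^2}{\nu^3 T} + \frac{1}{\nu^2}\right)\limsup_{t\to\infty}\|f(t)\|_{V'}^2.
$$

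Finally I would invoke the hypothesis $T \geqs c_\rho^2/\nu$, which makes $c_\rho^2/(\nu^3 T) \leqs 1/\nu^2$ and collapses the prefactor to $2/\nu^2$, yielding~\eqref{eqn:energy2}. There is no serious obstacle here; the only point requiring mild care is the order of operations in taking $\limsup$ (using that $\limsup$ of a sum is bounded by the sum of $\limsup$s and that the sup over $[t,t+T]$ of $\|f(s)\|_{V'}^2$ has the same $\limsup$ as $\|f(t)\|_{V'}^2$ itself), together with the standard fact that the positivity of the dropped $\|u(t+T)\|_H^2$ term legitimately weakens the inequality in the direction we need.
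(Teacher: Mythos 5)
Your proposal is correct and follows essentially the same route as the paper's own proof: integrate the intermediate inequality~\eqref{eqn:weak-start1} over $[t,t+T]$, drop the nonnegative $\|u(t+T)\|_H^2$ term, bound the forcing integral by its supremum, invoke Lemma~\ref{app:lemma:energy1} for the $\limsup$ of $\|u(t)\|_H^2$, and use $T \geqs c_{\rho}^2/\nu$ to collapse the prefactor to $2/\nu^2$. No gaps.
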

\begin{proof}
We begin with~\eqref{eqn:weak-start1}, which was
\begin{align*}
\frac{d}{dt}\|u\|_H^2
  + \nu \|u\|_{V}^2 
& \leqs \frac{1}{\nu} \|f\|_{V'}^2.
\end{align*}
Integrating from $t$ to $t+T$ with $T>0$ gives
\begin{align*}
\|u(t+T)\|_H^2 - \|u(t)\|_H^2
   + \nu \int_t^{t+T} \|u(\tau)\|_V^2 d\tau 
& \leqs \frac{1}{\nu} \int_t^{t+T} \|f(\tau)\|_{V'}^2 d\tau.
\end{align*}
Dropping the positive first term on the left and bounding the integral
on the right gives
\begin{align*}
\int_t^{t+T} \|u(\tau)\|_V^2 d\tau 
& \leqs \frac{1}{\nu} \|u(t)\|_H^2
  + \frac{T}{\nu^2} \sup_{t\leqs s \leqs t+T} \|f(s)\|_{V'}^2.
\end{align*}
Taking the $\limsup_{t \rightarrow \infty}$ of both sides, and
dividing by $T$, gives
\begin{align*}
\limsup_{t \rightarrow \infty}
\frac{1}{T}
\int_t^{t+T} \|u(\tau)\|_V^2 d\tau 
& \leqs \frac{1}{\nu T} 
  \limsup_{t \rightarrow \infty} \|u(t)\|_H^2
  + \frac{1}{\nu^2} 
        \limsup_{t\rightarrow \infty}
          \|f(t)\|_{V'}^2.
\end{align*}
Using the estimate from Lemma~\ref{app:lemma:energy1} gives then 
\begin{align*}
\limsup_{t \rightarrow \infty} \frac{1}{T}
\int_t^{t+T} \|u(\tau)\|_V^2 d\tau 
& \leqs \left( \frac{c_{\rho}^2}{\nu^3 T} + \frac{1}{ \nu^2} \right)
  \limsup_{t \rightarrow \infty} \|f(t)\|_{V'}^2.
\end{align*}
Since $T \geqs c_{\rho}^2 / \nu > 0$, 
we end up with~\eqref{eqn:energy2}.
\end{proof}

\section{Additional a priori estimates for time-varying viscosity}
\label{app:time-varying}

The following estimate gives a bound on the time-averaged product of viscosity and the $H^1$-semi-norm 
of the weak solution to~\eqref{eqn:nv}--\eqref{eqn:nv_bc}.
\begin{lemma}[{{\bf Time-averaged $H^1$-Estimates, time-varying viscosity}}]
   \label{lemma:energy2-time}
Let  \\ $u \in L^2((0,T);V)$ be a weak solution of the Navier-Stokes 
equations~\eqref{eqn:nv}--\eqref{eqn:nv_bc}, 
with Lipschitz domain $\Omega \subset \mathbb{R}^d$, $d=2$ or $d=3$.
Then for every $T$ with $T \geqs c_{\rho}^2/\underline{\nu} > 0$ it holds that

\begin{equation}
\label{eqn:energy2-time}
       \limsup_{t \rightarrow \infty}
       \frac{1}{T}
       \int_{t}^{t+T}
       \nu(\tau)
       \|u(\tau)\|_V^2
       d\tau
\leqs
       \frac{\bar{K}\underline{\nu} + c_{\rho}^2}{\underline{\nu} c_{\rho}^2}
       \limsup_{t \rightarrow \infty}
       \|f(t) \|_{V'}^2
\end{equation}
where \(\bar{K} = \limsup_{t \rightarrow \infty} \int_{0}^{t} e^{-\phi_{s}(t)) / c_\rho^2} ds\), \(\phi_{s}(t) = \int_{s}^t \nu(z)dz \), and \(c_{\rho}\) is the Poincar\'{e} constant.
\end{lemma}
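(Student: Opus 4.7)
The plan is to follow the template of Lemma~\ref{lemma:energy3-time} (and of Lemma~\ref{app:lemma:energy2} for the constant-viscosity case), but this time \emph{without} dividing by $\nu(t)$ at the start, since the quantity we want to control is $\nu(\tau)\|u(\tau)\|_V^2$ rather than $\|u(\tau)\|_V^2$. The starting point is again the differential inequality \eqref{eqn:time-weak-start1},
\[
\frac{d}{dt}\|u\|_H^2 + \nu(t)\|u\|_V^2 \leqs \frac{1}{\nu(t)} \|f\|_{V'}^2,
\]
which was already established in the proof of Lemma~\ref{lemma:energy1-time} and is available for free.

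Integrating this directly over $[t, t+T]$ produces $\|u(t+T)\|_H^2 - \|u(t)\|_H^2$ on the left together with the desired $\int_t^{t+T} \nu(\tau)\|u(\tau)\|_V^2\,d\tau$ term. Dropping the nonnegative $\|u(t+T)\|_H^2$ and bounding the right-hand side by pulling $1/\nu(\tau) \leqs 1/\underline{\nu}$ out of the integrand gives
\[
\int_t^{t+T} \nu(\tau)\|u(\tau)\|_V^2 \, d\tau \leqs \|u(t)\|_H^2 + \frac{T}{\underline{\nu}} \sup_{t \leqs s \leqs t+T} \|f(s)\|_{V'}^2.
\]
Next, dividing through by $T$ and taking $\limsup_{t\to\infty}$ of both sides yields an $(1/T)\limsup \|u(t)\|_H^2$ term plus a $(1/\underline{\nu}) \limsup \|f(t)\|_{V'}^2$ term.

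The first of these is then handled by invoking the $L^2$ estimate from Lemma~\ref{lemma:energy1-time}, which gives $\limsup_{t\to\infty} \|u(t)\|_H^2 \leqs (\bar{K}/\underline{\nu})\limsup_{t\to\infty}\|f(t)\|_{V'}^2$. Substituting this produces the combined coefficient $\bar{K}/(\underline{\nu}T) + 1/\underline{\nu}$ in front of $\limsup\|f\|_{V'}^2$. Finally, the hypothesis $T \geqs c_\rho^2/\underline{\nu}$ gives $1/T \leqs \underline{\nu}/c_\rho^2$, so that
\[
\frac{\bar{K}}{\underline{\nu}T} + \frac{1}{\underline{\nu}} \leqs \frac{\bar{K}}{c_\rho^2} + \frac{1}{\underline{\nu}} = \frac{\bar{K}\underline{\nu} + c_\rho^2}{\underline{\nu}\,c_\rho^2},
\]
which is exactly the constant appearing in \eqref{eqn:energy2-time}.

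There is no real analytical obstacle here; the argument is entirely parallel to the proofs of Lemma~\ref{lemma:energy3-time} and Lemma~\ref{app:lemma:energy2}, and in fact strictly easier, because keeping $\nu(\tau)$ inside the integral on the left avoids the integration-by-parts/boundary-term step that was needed in Lemma~\ref{lemma:energy3-time}. The only point that requires a little care is bookkeeping on the constant: one must check that, with $T \geqs c_\rho^2/\underline{\nu}$, the two contributions combine cleanly into $(\bar{K}\underline{\nu}+c_\rho^2)/(\underline{\nu}c_\rho^2)$, and that Lemma~\ref{lemma:energy1-time} is applied with the correct $\bar{K}$ (the $\limsup$ of the $K(t)$ appearing in its proof).
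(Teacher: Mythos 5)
Your proposal is correct and follows essentially the same route as the paper's proof: integrate \eqref{eqn:time-weak-start1} over $[t,t+T]$ without dividing by $\nu$, drop the $\|u(t+T)\|_H^2$ term, invoke Lemma~\ref{lemma:energy1-time} for the $\limsup$ of $\|u(t)\|_H^2$, and use $T \geqs c_\rho^2/\underline{\nu}$ to assemble the constant $(\bar{K}\underline{\nu}+c_\rho^2)/(\underline{\nu}c_\rho^2)$. The only cosmetic difference is that you pull $1/\nu \leqs 1/\underline{\nu}$ out of the forcing integral immediately, while the paper keeps $\sup_s \nu(s)^{-1}\|f(s)\|_{V'}^2$ and bounds it by $\underline{\nu}^{-1}$ one step later.
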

\begin{proof}
We begin with~\eqref{eqn:time-weak-start1}, which was
\begin{align*}
\frac{d}{dt}\|u\|_H^2
  + \nu \|u\|_{V}^2 
& \leqs \frac{1}{\nu} \|f\|_{V'}^2.
\end{align*}
Integrating from $t$ to $t+T$ with $T>0$ gives
\begin{align*}
\|u(t+T)\|_H^2 - \|u(t)\|_H^2
   + \int_t^{t+T} \nu(\tau)\|u(\tau)\|_V^2 d\tau 
& \leqs  \int_t^{t+T} \frac{1}{\nu(\tau)}\|f(\tau)\|_{V'}^2 d\tau.
\end{align*}
Dropping the positive first term on the left and bounding the integral
on the right gives
\begin{align*}
\int_t^{t+T} \nu(\tau)\|u(\tau)\|_V^2 d\tau 
& \leqs \|u(t)\|_H^2
  + T \sup_{t\leqs s \leqs t+T} \frac{1}{\nu(s)}\|f(s)\|_{V'}^2.
\end{align*}
Taking the $\limsup_{t \rightarrow \infty}$ of both sides, and
dividing by $T$, gives
\begin{align*}
\limsup_{t \rightarrow \infty}
\frac{1}{T}
\int_t^{t+T} \nu(\tau)\|u(\tau)\|_V^2 d\tau 
& \leqs \frac{1}{ T} 
  \limsup_{t \rightarrow \infty} \|u(t)\|_H^2
  + \limsup_{t\rightarrow \infty}
          \frac{1}{\nu(t)}\|f(t)\|_{V'}^2.
\end{align*}
Using the estimate from Lemma~\ref{lemma:energy1-time} and bounding the right-most term gives then 
\begin{align*}
\limsup_{t \rightarrow \infty} \frac{1}{T}
\int_t^{t+T} \nu(\tau)\|u(\tau)\|_V^2 d\tau 
& \leqs \left( \frac{K}{\underline{\nu}T} + \frac{1}{ \underline{\nu}} \right)
  \limsup_{t \rightarrow \infty} \|f(t)\|_{V'}^2.
\end{align*}
Since $T \geqs c_{\rho}^2 / \underline{\nu} > 0$, 
we end up with~\eqref{eqn:energy2-time}.
\end{proof}

The following estimate is a variation of the other time-varying estimates from Lemmas \ref{lemma:energy3-time} and \ref{lemma:energy2-time}
and gives yet another slightly different bound on the time-averaged $H^1$-semi-norm 
of a weak solution to~\ref{eqn:nv}--\ref{eqn:nv_bc}.
\begin{proposition}[{{\bf Time-averaged $H^1$-Estimates, time-varying viscosity}}]
   \label{proposition:energy4-time}
 Let \\ $u \in L^2((0,T);V)$ be a weak solution of the Navier-Stokes 
equations~\eqref{eqn:nv}--\eqref{eqn:nv_bc}, 
with Lipschitz domain $\Omega \subset \mathbb{R}^d$, $d=2$ or $d=3$.
Then for every $T$ with $T \geqs c_\rho/\underline{\nu} > 0$ it holds that
\begin{equation*}
       \limsup_{t \rightarrow \infty}
       \frac{1}{T}
       \int_{t}^{t+T}
       \frac{1}{\nu(\tau)}
       |u(\tau)|_{H^1(\Omega)}^2
       d\tau
\leqs
       C
       \limsup_{t \rightarrow \infty}
       \|f(t) \|_{L^2(\Omega)}^2
\end{equation*}
        where \(C\) is dependent only on \(\bar{K} = \limsup_{t \rightarrow \infty} \int_{0}^{t} e^{-\phi_{s}(t)) / c_\rho^2} ds\), \(\phi_{s}(t) = \int_{s}^t \nu(z)dz \), and \(c_{\rho}\) the Poincar\'{e} constant.
\end{proposition}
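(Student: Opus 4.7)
The plan is to follow the same pattern used in Lemmas \ref{lemma:energy1-time}, \ref{lemma:energy3-time}, and \ref{lemma:energy2-time}, starting from the basic differential inequality \eqref{eqn:time-weak-start1},
$$
\frac{d}{dt}\|u\|_H^2 + \nu(\tau)\|u\|_V^2 \leqs \frac{1}{\nu(\tau)}\|f\|_{V'}^2,
$$
and manipulating it so that the weight $1/\nu(\tau)$ appears in front of $\|u\|_V^2 = |u|_{H^1(\Omega)}^2$ upon time integration.

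The simplest route is to reduce to Lemma~\ref{lemma:energy3-time} directly. Since the pointwise lower bound $\nu(\tau) \geqs \underline{\nu}$ from~\eqref{eqn:time-nu-1}--\eqref{eqn:time-nu-2} gives $1/\nu(\tau) \leqs 1/\underline{\nu}$, we have
$$
\frac{1}{T}\int_t^{t+T}\frac{1}{\nu(\tau)}|u(\tau)|_{H^1(\Omega)}^2\,d\tau
\;\leqs\; \frac{1}{\underline{\nu}}\cdot\frac{1}{T}\int_t^{t+T}\|u(\tau)\|_V^2\,d\tau.
$$
Apply Lemma~\ref{lemma:energy3-time} to bound the $\limsup$ of the right-hand side, and then convert the $V'$-norm of $f$ on the right to an $L^2$-norm via the standard duality estimate $\|f\|_{V'} \leqs c_{\rho}\|f\|_{L^2(\Omega)}$ (a consequence of Poincar\'e applied to test functions in $V$). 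This yields the claimed bound with $C = (\bar{K}\underline{\nu}^3 + c_\rho^2)/\underline{\nu}^3$.

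A more direct approach, paralleling Lemma~\ref{lemma:energy3-time}, is to divide \eqref{eqn:time-weak-start1} through by $\nu^2(\tau)$ to produce $(1/\nu(\tau))\|u\|_V^2$ on the left, integrate from $t$ to $t+T$, integrate the $\nu^{-2}(\tau)\,\tfrac{d}{d\tau}\|u\|_H^2$ term by parts, drop the positive endpoint contribution at $t+T$, invoke Lemma~\ref{lemma:energy1-time} to control $\|u(t)\|_H^2$, and finally divide by $T$ and send $t\to\infty$. The main obstacle in this direct route is the interior term $\int_t^{t+T}\|u\|_H^2 \,\tfrac{d}{d\tau}(\nu^{-2}(\tau))\,d\tau$ produced by integration by parts, whose sign is indeterminate without a monotonicity assumption on $\nu$; one must either require such monotonicity or absorb this term using the pointwise bounds $\underline{\nu}\leqs\nu\leqs\overline{\nu}$ at the cost of a larger constant. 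The indirect route via Lemma~\ref{lemma:energy3-time} sidesteps this difficulty cleanly and suffices for the stated proposition.
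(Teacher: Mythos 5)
The paper leaves the proof of Proposition~\ref{proposition:energy4-time} empty (the \verb|proof| environment is a stub), so there is no argument of record to compare against; your proposal supplies one, and it is essentially correct. The reduction is sound: $\|u\|_V=|u|_{H^1(\Omega)}$ by~\eqref{eqn:notation-V}, the pointwise bound $1/\nu(\tau)\leqs 1/\underline{\nu}$ pulls the weight out of the integral, Lemma~\ref{lemma:energy3-time} controls the resulting unweighted time average, and the duality estimate $\|f\|_{V'}\leqs c_\rho\|f\|_{L^2(\Omega)}$ (Poincar\'e applied to the test function) converts the norm on the right, giving $C=(\bar{K}\underline{\nu}^3+c_\rho^2)/\underline{\nu}^3$. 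Two small mismatches with the statement as printed are worth flagging. First, your route requires $T\geqs c_\rho^2/\underline{\nu}$ in order to invoke Lemma~\ref{lemma:energy3-time}, whereas the proposition asserts the bound for $T\geqs c_\rho/\underline{\nu}$; this is almost certainly a typo in the proposition (every other estimate in the paper uses $c_\rho^2/\underline{\nu}$), but you should either state the corrected threshold or keep the $1/(\underline{\nu}T)$ factor explicit in $C$. Second, your constant depends on $\underline{\nu}$ as well as $\bar{K}$ and $c_\rho$, while the statement claims dependence only on the latter two; since $\bar{K}$ already encodes $\nu$ through $\phi_s$, this is a looseness in the statement rather than a defect in your proof, but it deserves a remark. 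Finally, your observation about the ``direct'' route is apt and goes beyond what the paper acknowledges: the integration by parts of $\nu^{-2}(\tau)\,\tfrac{d}{d\tau}\|u\|_H^2$ produces an interior term of indeterminate sign, and indeed the paper's own proof of Lemma~\ref{lemma:energy3-time} silently drops the analogous term $\int_t^{t+T}\nu^{-2}\|u\|_H^2\,d\tau$ as if it arose with a favorable sign, which is only valid under a monotonicity assumption on $\nu$. Your indirect route does not repair that underlying issue (it inherits whatever validity Lemma~\ref{lemma:energy3-time} has), but it is the cleanest way to derive the proposition from the stated results.
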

\begin{proof}
\end{proof}
\bibliographystyle{abbrv}
\bibliography{m}
\vspace*{0.5cm}
\end{document}